\setlist[itemize]{label=\textbullet, leftmargin=*}
\theoremstyle{plain}
\newtheorem{theorem}{Theorem}[section]
\newtheorem{lemma}[theorem]{Lemma}
\newtheorem{corollary}[theorem]{Corollary}
\theoremstyle{remark}
\newtheorem{remark}[theorem]{Remark}
\newtheorem{assumption}{Assumption}
\newcommand{\bbA}{{\bf A}}
\newcommand{\bbC}{{\bf C}}
\newcommand{\bbD}{{\bf D}}
\newcommand{\bbS}{{\bf S}}
\newcommand{\bbT}{{\bf T}}
\newcommand{\bbI}{{\bf I}}
\newcommand{\bbB}{{\bf B}}
\newcommand{\bbX}{{\bf X}}
\newcommand{\bbx}{{\bf x}}
\newcommand{\bbY}{{\bf Y}}
\newcommand{\bbZ}{{\bf Z}}
\newcommand{\bbe}{{\bf e}}
\newcommand{\bqn}{\begin{eqnarray*}}
	\newcommand{\eqn}{\end{eqnarray*}}
\newcommand{\rE}{{\textrm{E}}}
\newcommand{\bbf}{{\bf f}}
\newcommand{\bbw}{{\bf w}}
\newcommand{\bbU}{{\bf U}}
\newcommand{\bbu}{{\bf u}}
\newcommand{\bbV}{{\bf V}}
\newcommand{\bbv}{{\bf v}}
\numberwithin{equation}{section}
\begin{document}

\begin{frontmatter}
\title{The Asymptotic Properties of the Extreme Eigenvectors of High-dimensional Generalized Spiked Covariance Model}
\runtitle{The Asymptotic Properties of the Extreme Eigenvectors}

\begin{aug}
\author[A]{\inits{F.}\fnms{Zhangni}~\snm{Pu}\ead[label=e1]{puzn687@nenu.edu.cn}}
\author[A]{\inits{S.}\fnms{Xiaozhuo}~\snm{Zhang}\ead[label=e2]{zhangxz722@nenu.edu.cn}\orcid{0009-0000-6333-7768}}
\author[A]{\inits{T.}\fnms{Jiang}~\snm{Hu}\ead[label=e3]{huj156@nenu.edu.cn}}
\author[A]{\inits{F.}\fnms{Zhidong}~\snm{Bai}\ead[label=e4]{baizd@nenu.edu.cn}
\orcid{0000-0002-5300-5513}}
\address[A]{KLASMOE and School of Mathematics $\&$ Statistics, Northeast Normal University, No. 5268 People's Street, Changchun 130024, China\printead[presep={,\ }]{e1,e2,e3,e4}}

\end{aug}

\begin{abstract}
In this paper, we investigate the asymptotic behaviors of the extreme eigenvectors in a general spiked covariance matrix, where the dimension and sample size increase proportionally. We eliminate the restrictive assumption of the block diagonal structure in the population covariance matrix. Moreover, there is no requirement for the spiked eigenvalues and the 4th moment to be bounded. Specifically, we apply random matrix theory to derive the convergence and limiting distributions of certain projections of the extreme eigenvectors in a large sample covariance matrix within a generalized spiked population model. Furthermore, our techniques are robust and effective, even when spiked eigenvalues differ significantly in magnitude from nonspiked ones. Finally, we propose a powerful statistic for hypothesis testing for the eigenspaces of covariance matrices.
\end{abstract}

\begin{keyword}
\kwd{Spiked model}
\kwd{high-dimensional covariance matrix}
\kwd{random matrix theory}
\kwd{eigenvector}
\end{keyword}

\end{frontmatter}

\section{Introduction}

In statistics, the sample covariance matrix describes the relationships between variables in a dataset. Based on the theory of random matrices, researchers have traditionally investigated their asymptotic properties in high-dimensional contexts where  both the sample size and dimension increase. Specific first and second-order results for the eigenvalues and eigenvectors of sample covariance matrices have been discussed by Bai and Silverstein in \cite{2009baisil}, Couillet and Debbah in \cite{2011CD}, and Yao, Zheng and Bai in \cite{2015YZB}.

In recent years, the concept of the spiked sample covariance matrix has become popular and significant in modern statistical theory and practice, such as in wireless communications \cite{2011CD}, fault detection in sensor networks \cite{2013CH}, and signal detection \cite{2009B}, where a few variables may have a much stronger influence on the overall data structure. The spiked sample covariance matrix is a modified version of the sample covariance matrix designed to accommodate the presence of a few dominant factors, known as ``spikes,'' in the data. These spikes correspond to a few large or small eigenvalues of the population covariance matrix that are assumed to be well separated from the rest. By incorporating information about these spikes, the spiked sample covariance matrix can improve the accuracy of statistical inference and reduce the risk of false discoveries. In many cases, it serves as a more accurate and useful estimator of the true population covariance matrix. 

Consider a sequence of independently and identically distributed (i.i.d.) $p$-dimensional real random vectors $(\bbx_1,\cdots,\bbx_n)=\bbX=(x_{ij}), 1\leqslant i\leqslant p,1\leqslant j\leqslant n$, with zero mean and population covariance matrix $\pmb{\Sigma}$. Define the corresponding sample covariance matrix as 
\begin{equation}\label{model}
	\bbB=\frac{1}{n}\bbT\bbX\bbX^*\bbT^*,
\end{equation}
with $l_1\geq l_2\geq \cdots \geq 0$ denoting the eigenvalues of $\bbB$ with corresponding eigenvector $\bbf_i$.
It is well known that the empirical spectral distribution (ESD) of a large sample covariance matrix converges to the family of Mar\'{c}enko-Pastur laws under fairly general conditions on the sample variables \cite{1967mplaw}. For example, when the sample vectors have independent coordinates and unit variances and assuming that the ratio $p/n$ tends to a positive limit $y\in(0,1)$, then the limiting distribution is the classical Mar\'{c}enko-Pastur law $F_y(dx)$
\begin{equation}
	F_y(dx)=\dfrac{1}{2\pi xy}\sqrt{(x-a_y)(b_y-x)}dx,\quad a_y\leq x\leq b_y,
\end{equation}
where $a_y=(1-\sqrt{y})^2$, and $b_y=(1+\sqrt{y})^2$. Furthermore, the smallest and largest eigenvalues converge almost surely to the boundary $a_y$ and $b_y$, respectively.

In the context of Random Matrix Theory (RMT), the spiked model was first studied by Johnstone in \cite{2001Johnstone}. The spiked model is primarily focused on the asymptotic behaviors of a few largest eigenvalues and their associated eigenvectors in a non-null case where all eigenvalues of $\pmb{\Sigma}$ are equal to one except for a fixed small number of spikes, i.e.,
\begin{equation}
	\text{Spec}\left( \pmb{\Sigma}\right)=\{ \sigma_1, \cdots, \sigma_M, \underbrace{1, \cdots, 1}_{p-M}\} . 
\end{equation} 
Additionally, in the null case where $\pmb{\Sigma}$ is the identity $\bbI$, Johnstone \cite{2001Johnstone} established the Tracy-Widom law for the maximum eigenvalues of a real Wishart matrix $\bbB$.
Consequently, high-dimensional statistical inference requires a comprehensive understanding of the distributions of eigenvalues and eigenvectors of sample covariance matrices. In Johnstone's spiked model settings, a systematic examination of individual eigenvalues has been extensively explored by Baik, Ben Arous and P\'{e}ch\'{e} in \cite{2005BBP}. This paper investigates the limiting behavior of the largest eigenvalue in the context of complex Gaussian variables. A similar phase transition phenomenon of largest sample eigenvalues was shown in Baik and Silverstein \cite{2006baiksil}. They extensively investigate the almost sure limits of sample eigenvalues under the Mar\'{c}enko-Pastur regime as $p,n\to\infty$, $p/n\to y\in(0,\infty)$, and found out that these limits vary depending on whether the corresponding population spiked eigenvalues are larger or smaller than the critical values $1+\sqrt{y}$ and $1-\sqrt{y}$. Subsequently, extensive research has been dedicated to investigating the asymptotic properties of spiked eigenvalues in high-dimensional covariance matrices, including contributions from Paul \cite{2007paul} and Bai and Yao \cite{2008baiyao}. To improve the simplified assumptions, Bai and Yao \cite{2012baiyao} contributed to dealing with a more general spiked model, i.e.,
 \begin{equation}
	\pmb{\Sigma}=\begin{pmatrix} \pmb{\Lambda} & 0 \\ 0 & \bbV \end{pmatrix},
\end{equation} 
in which a condition of the diagonal block $\pmb\Sigma$ independence and finite 4th moments are assumed. Building on these studies, Jiang and Bai \cite{PG4MT} further explore a general spiked covariance matrix by applying the proposed partial Generalized Four Moment Theorem (PG4MT) to the Central Limit Theorem (CLT) for its spiked eigenvalues. They give the explicit CLT for the spiked eigenvalues of high-dimensional generalized covariance matrices. Furthermore, efforts have been made to enhance research on the spiked population model using Principal Component Analysis (PCA) or Factor Analysis (FA). For instance, Bai and Ng \cite{2002baiNg}, Hoyle and Rattray \cite{2004hoyle}, Onatski \cite{2009onatski}, and so on. 

In contrast, research on the asymptotic properties of eigenvectors of large dimensional random matrices is generally more challenging and less developed. However, eigenvectors play an important role in high-dimensional statistics and have provided valuable insights into the behavior of complex systems. Understanding the asymptotic properties of eigenvectors corresponding to spiked eigenvalues enables researchers to develop novel techniques for analyzing and manipulating high-dimensional data, modeling complex systems, and solving a variety of significant computational problems. 

The earlier investigations into the properties of eigenvectors trace back to Anderson \cite{1963anderson}, where the author established the asymptotic normality of eigenvectors of the Wishart matrix as $M$ is fixed and $n$ tends to infinity. For the high-dimensional case, Johnstone \cite{2001Johnstone} introduced the spiked model to test the existence of principal components. Subsequently, Paul \cite{2007paul} studied the directions of eigenvectors corresponding to spiked eigenvalues. The study focuses on a spiked model structure in the covariance matrix, where there is a large eigenvalue and several smaller eigenvalues. A novel estimation approach is proposed, with its asymptotic normality and consistency of the method are proved by studying the asymptotic behavior of sample eigenvalues and eigenvectors. 

Moreover, the spectral properties of spiked covariance models have been extensively studied, yielding precise analytical results regarding the asymptotic first-order and distributional properties of both eigenvalues and eigenvectors; see, for example, Baik and Silverstein \cite{2006baiksil}; Bai and Yao \cite{2008baiyao}; Benaych-Georges and Nadakuditi \cite{2011BN}; Couillet and Hachem \cite{2013CH}; Bloemendal et al. \cite{2016Betal}. For comprehensive reviews, readers may refer to Couillet and Debbah \cite{2011CD} and Yao, Zheng and Bai \cite{2015YZB}. The more general work is the recent contributions by Bao et al. \cite{2020baoding},  which study the asymptotic behavior of the extreme eigenvalues and eigenvectors of the high-dimensional spiked sample covariance matrices. They require the bounded infinite moments and the condition $\pmb\Sigma=\bbI+\bbS$, where $\bbS$ is a fixed-rank deterministic Hermitian matrix. The limiting behavior of the extreme eigenvectors has also been studied for various related models, such as the finite-rank deformation of Wigner matrices for the extreme eigenvalues \cite{2009ca,2013isotropic,2014deformed}. In \cite{2018ca} and \cite{2020baosingular}, a non-universality phenomenon for the eigenvector distribution for the finite-rank deformation of Wigner matrices and the signal-plus-noise model were demonstrated, respectively. Here we also refer to \cite{2018MJMY,2018J&Y,2019fan} for related study on the extreme eigenstructures from more statistical perspective. 

In this paper, we further consider a general spiked covariance matrix and apply random matrix theory to investigate the asymptotic properties of eigenvectors corresponding to extreme eigenvalues of large sample covariance matrix in a generalized spiked population model. Our analysis primarily relies on tools borrowed from random matrix theory. For a comprehensive understanding of this theory, readers may refer to the seminal book \cite{1991mehta} and a modern review by Bai \cite{1999bai}. Additionally, we introduce another important tool in this paper, namely the PG4MT for deriving a CLT of spiked eigenvalues of sample covariance matrices. The PG4MT is presented in Lemma \ref{PG4MT}.

Our contributions mainly include the following aspects:

\begin{itemize}[leftmargin=4em]
	\item Our work only requires the condition of matching moments up to the 2nd order and a rate $o(x^{-4})$ of the tail probability $P(|X|\geq x)$ as $x\to\infty$. This condition stands as both necessary and sufficient for the weak convergence of the largest eigenvalue.
	\item  We considered a spiked covariance matrix as a general non-negative definite matrix $\pmb{\Sigma}=\bbT\bbT^*$. Here, $\bbT$ undergoes singular value decomposition as $\bbT=\bbV\begin{pmatrix} \bbD_1 & 0 \\ 0 & \bbD_2 \end{pmatrix}\bbU^*$, where $\bbV$ and $\bbU$ are unitary matrices. The entries of $\bbD_1$ encompass the squares of all spiked eigenvalues of $\pmb{\Sigma}$, while those of $\bbD_2$ represent the squares of non-spiked eigenvalues of $\pmb\Sigma$. Automatically, the diagonal block independent assumption given in Bai and Yao \cite{2008baiyao} is removed.
	\item We extend our focus to a generalized scenario where spiked eigenvalues either exceed their related left-thresholds or fall below their related right-thresholds. Importantly, in our work, the spiked eigenvalues are not necessarily required to be bounded.
\end{itemize}

Specifically, in this paper, we consider the generalized spiked sample covariance, represented as (\ref{model}), where $ \bbT $ is a $ p \times p $ deterministic matrix. Consequently, $ \bbT\bbT^*=\pmb\Sigma $ is the population covariance matrix, which can be seen as a general non-negative definite matrix with the spectrum formed as
\begin{equation}\label{dk}
	\sigma_{p,1}, \cdots, \sigma_{p,p}
\end{equation}
in descending order. 

Since the population covariance matrix may have multiple roots, let $\sigma_{p,j_{k}+1}$, $\cdots, \sigma_{p,j_{k}+m_{k}}$ be equal to $d_k^2$, $k=1, \cdots, K$, respectively, where $\mathcal{J}_k=\{j_{k}+1,\cdots,j_{k}+m_{k}\}$ is the set of ranks of the $m_k$-ple eigenvalue
$d_k^2$ in the array (\ref{dk}). Then $d_1^2,\cdots,d_K^2$ with multiplicity $m_k$, $k=1,\cdots,K$, respectively, satisfying $m_1+\cdots+m_K=M$, a fixed integer, are the population spiked eigenvalues of $\pmb\Sigma$ arranged arbitrarily in groups among all the eigenvalues.

To consider the asymptotic properties of the spiked eigenvectors of a generalized sample covariance matrix $\bbB$, it is necessary to determine the following Assumptions \ref{tail}-\ref{distant}: 

\begin{assumption}\label{tail}
	\{$x_{ij}: 1\leq i\leq p, 1\leq j\leq n$\} are independently and identically distributed (i.i.d.) real random variables satisfying $$Ex_{ij}=0,~~~~E|x_{ij}|^2=1$$ and $$\lim\limits_{\tau\to\infty}\tau^4P(|x_{ij}|>\tau)=0.$$

\end{assumption}
\begin{assumption}\label{aT}	
	Denote the empirical spectral distribution (ESD) of $\pmb\Sigma=\bbT\bbT^*$ as $\mathit{H_n}$, which tends to a proper probability measure $\mathit{H}$ as $p\to\infty$.  Moreover, $\bbT$ is nonrandom matrix with $M$, $K$ and $m_k, k=1,\dots,K$ are fixed.
\end{assumption}
\begin{remark}
	$\bbT$ admits the singular decomposition $\bbT=\bbV\begin{pmatrix} \bbD_1 & 0 \\ 0 & \bbD_2 \end{pmatrix}\bbU^*$, where $\bbV$ and $\bbU$ are unitary matrices, $\bbD_1^2=\text{diag}(\underbrace{d_1^2,\dots,d_1^2}_{m_1},\underbrace{d_2^2,\dots,d_2^2}_{m_2},\dots,\underbrace{d_K^2,\dots,d_K^2}_{m_K})$ is a diagonal matrix of the $M$ spiked eigenvalues and $\bbD_2^2$ is the diagonal matrix of the non-spiked eigenvalues with bounded components. Then the  corresponding sample covariance matrix $\bbB=\frac{1}{n}\bbT\bbX\bbX^*\bbT^*$ is the so-called generalized spiked sample covariance matrix.
\end{remark}
\begin{assumption}\label{aU}
	Suppose that all
	\begin{equation}\label{ux}
		\kappa_{x,k}=\lim_{n\to\infty}\left( \sum_{i\in\mathcal{J}_k}\sum_{t=1}^p u_{it}^4+\sum_{\stackrel{i_1\neq i_2}{i_1,i_2\in\mathcal{J}_k}}\sum_{t=1}^p u_{i_1t}^2u_{i_2t}^2\right) \left( Ex_{11}^4\mathit{I}(|x_{11}|<\sqrt{n})-3\right)	
			\end{equation}
	are finite,
		where $\mathit{I(\cdot)}$ is indicator and $\bbU_1=(u_{ts})_{t=1,\cdots,p;s=1,\cdots,M}$ is the first $\mathit{M}$ columns of matrix $\bbU$.
\end{assumption}

\begin{remark}
	In this paper, we consider the case where the 4th moment may unnecessarily exist. By (\ref{ux}), note that $\kappa_{x,k}=0$ for the case $Ex_{11}^4=3$ or the case $\max_{i,t}u_{it}^2\to 0$ and the 4th moment is finite. If the 4th moment does not exist, we only need to take the appropriate $u_{it}^2$ to ensure that $\kappa_{x,k}$ is finite.

\end{remark}
\begin{assumption}\label{p/n}	
	$p/n=c_n\to c>0$ and both $n$ and $p$ go to infinity simultaneously. 
\end{assumption}
\begin{assumption}\label{distant}
	For any $1\leq j,k \leq K$, there exists some constant $d>0$ independent of $p$ and $n$, such that
	\begin{equation}\label{sc}
		\min\limits_{j\neq k}\left| \dfrac{d_k^2}{d_j^2}-1\right| >d.
	\end{equation}
 Moreover, for $1\leq k\leq K$, $d_k^2$ satisfy ${\psi\textquotesingle(d_k^2)}>0$, where
\begin{equation}\label{psi}
\psi(x)=x\left( 1+c\int\dfrac{t}{x-t}\text{d}H(t)\right)
\end{equation}
as detailed in Proposition 2.1 in Jiang and Bai \cite{2021jiangbai}.	
\end{assumption}
\begin{remark}
	Assumption \ref{distant} ensures that the spiked eigenvalues of the matrix $\pmb\Sigma$, $d_1^2,\cdots,d_K^2$ with multiplicities $m_1,\cdots,m_K$ laying outside the support of $\mathit{H}$, and the gaps of adjacent spiked eigenvalues have a constant lower bound.
\end{remark}
The rest of the paper is arranged as follows: In Section \ref{mainresults}, we give the separation condition and state our main results of the asymptotic law and CLT for the extreme eigenvectors of the generalized spiked model in a general case. In Section \ref{appl}, we discuss the application of our results. In the Section \ref{s}, simulations are conducted to evaluate our work. Then in Section \ref{proof}, after some explanations of the truncation procedure and useful lemmas, we prove our main result, Theorem \ref{th2}, based on a key technical PG4MT for some statistics and RMT. The proof of Theorem \ref{th1} and \ref{th3} are also stated in Section \ref{proof}.  Finally, we conclude in the Section \ref{conclusion}.

Throughout the paper, $\text{tr}\bbA$ denotes the trace of a square matrix $\bbA$. For vectors $\bbv$ and $\bbw$, we use $\langle\bbv,\bbw\rangle=\bbv^*\bbw$ to denote their scalar product. It is important to note that the vectors and matrices considered in this paper are real, so $\bbv^*\bbw=\bbv^\top\bbw$ and $\bbA^*=\bbA^\top$. Additionally, we use $\left\|\bbv\right\| $ to represent the $\ell^2$ norm for a vector $\bbv$. Moreover, we use $X_n\xrightarrow{a.s.}X$, $X_n\xrightarrow{P}X$ and $X_n\xrightarrow{D}X$ to represent the convergence of a sequence \{$X_n$\} of random variables to the random variable $X$ almost surely, in probability and in distribution, respectively.

\section{Main results}\label{mainresults}

In this section, we give our main results. The sample eigenvalues of the generalized spiked sample covariance matrix $\bbB$ are sorted in descending order as$$l_1(\bbB),\cdots,l_j(\bbB),\cdots,l_p(\bbB).$$
Since the spiked eigenvalues may tend to infinity in our work, from Proposition 2.1 in \cite{2021jiangbai} and Theorem 1 in \cite{PG4MT}, we know that for all $j\in\mathcal{J}_k$, $\{l_j(\bbB)/\psi_k - 1\}$ almost surely converges to 0 and propose a CLT for \\$\left( \sqrt{n}\left( \dfrac{l_j(\bbB)}{\psi_{nk}}-1\right), j\in\mathcal{J}_k \right) ^{\prime}$, respectively, where 	
\begin{equation}\label{psink}
	\psi_{nk}:=\psi_n(d_k^2)=d_k^2\left( 1+c_n\int\dfrac{t}{d_k^2-t}\text{d}H_n(t)\right).
\end{equation} Since the convergence of $c_n \to c$ and $H_n \to H$ may be very slow, the difference $\sqrt{n}(l_j-\psi_k)$ may not have a limiting distribution. So, we usually use $\psi_{nk}$ instead of $\psi_k$, in particular during the process of deriving the CLT.

In the sequel, we fix an $i$ and consider a (possibly) multiple $d_k^2, i\in\mathcal{J}_k$. In order to study the generalized components of the eigenvector of the $|\mathcal{J}_k|$-fold multiple $d_k^2$, we introduce
$$\left\| \pmb\xi_{i1k}\right\| ^2=\sum_{j\in\mathcal{J}_k}\langle \bbv_j,\bbf_i\rangle^2,$$ where $\bbv_i$ be the eigenvector associated with the $i$-th eigenvalue of $\pmb\Sigma$.
For example, if $d_1^2$ is a simple eigenvalue, that $\mathcal{J}_1=\{1\}$, then $\left\| \pmb\xi_{111}\right\| ^2=\left\langle \bbv_1,\bbf_1\right\rangle^2.$

\begin{remark}
	Since the perturbations of the sample spiked eigenvalues fundamentally overlap, the correlation inference based on the distinct eigenvalues is considered invalid. Moreover, the eigenvector is very sensitive to the gap of the eigenvalue, so the inference of $\bbv_i$ based on $\bbf_i$ is also unreliable. Hence, the separation condition can be considered as the minimum requirement for spike detection. The inequality (\ref{sc}) is the so-called separation condition which guarantees that the distinct (possibly multiple) $d_i^2$ are well separated to prevent the sample spikes $l_i$ corresponding to distinct $d_i^2$ tending to a common limit.
\end{remark}
With the above necessary notations and assumptions, we now state the main theorem about the CLT of the eigenvectors corresponding to the spiked eigenvalues of the generalized spiked sample covariance matrix $\bbB$.

\begin{theorem}\label{th2}
	Suppose that Assumptions \ref{tail}-\ref{distant} hold. Fix an $i\in\mathcal{J}_k$ and let $\bbv_j$ be the $j$th column of matrix $\bbV$. Then,
	\begin{equation}
		\sqrt{n}\left( \left\| \pmb\xi_{i1k}\right\| ^2-\dfrac{1}{1+d_k^2\left( \underline{m}_{n1}(\psi_{nk})+\psi_k \underline{m}_{n2}(\psi_{nk})\right)}\right) 
	\end{equation}
	converges weakly to a Gaussian variable whose mean is zero and variance is $$\dfrac{\theta_k}{\left( 1+d_k^2\left( \underline{m}_1(\psi_k)+\psi_k \underline{m}_2(\psi_k)\right)\right) ^4},$$
	where $\psi_k$ and $\psi_{nk}$ are defined  in (\ref{psi}) and (\ref{psink}) respectively, $\theta_k=d_k^4\left( \dfrac{2}{m_k}\sigma_k+\dfrac{\kappa_{x,k}}{m_k^2}\rho_k^2\right) $, $\sigma_k=\underline{m}_2(\psi_k)+2\psi_k\underline{m}_3(\psi_k)+\psi_k^2\underline{m}_4(\psi_k)$, $\rho_k=\underline{m}_1(\psi_{k})+\psi_{k}\underline{m}_{2}(\psi_{k})$, and the definition of $\kappa_{x,k}$ is given in (\ref{ux}). Moreover,
	$$\underline{m}_j(z)=\int\dfrac{1}{(x-z)^j}\text{d}\underline{F}(x),$$
	$j=1,2,3,4$, and $\underline{F}$ is the LSD of $\bbX^*\bbU_2\bbD_2^2\bbU_2\bbX/n$.
\end{theorem}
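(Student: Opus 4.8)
The plan is to express the squared projection $\|\pmb\xi_{i1k}\|^2 = \sum_{j\in\mathcal J_k}\langle\bbv_j,\bbf_i\rangle^2$ through a contour integral of the resolvent of a suitable companion matrix, and then to extract its fluctuations via the PG4MT (Lemma \ref{PG4MT}) together with the CLT for linear spectral statistics. First I would reduce to the standardized setting by the truncation and re-centering procedure for the entries $x_{ij}$ that is used for Theorem \ref{th1}; under Assumption \ref{tail} this changes none of the limits. Next, writing $\bbB = \frac1n\bbT\bbX\bbX^*\bbT^* = \bbV\,\mathrm{diag}(\bbD_1,\bbD_2)\,\bbU^*\bbX\bbX^*\bbU\,\mathrm{diag}(\bbD_1,\bbD_2)\,\bbV^*/n$, I would pass to the rotated data $\bbY=\bbU^*\bbX$ (still i.i.d.\ up to the moment bookkeeping in Assumption \ref{aU}) and split $\bbY$ into its first $M$ rows $\bbY_1$ and remaining rows $\bbY_2$. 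The key identity is that for $j\in\mathcal J_k$, $\langle\bbv_j,\bbf_i\rangle$ is, up to normalization of $\bbf_i$, a coordinate of the eigenvector of the $M\times M$ random matrix
\begin{equation}\label{eq:Mmat}
\bbK_n(z) = \bbD_1\bbY_1\Big(zI - \tfrac1n\bbY_2^*\bbD_2^2\bbY_2\Big)^{-1}\bbY_1^*\bbD_1/n
\end{equation}
evaluated at $z=l_i(\bbB)$; this is the standard "deflation'' reduction (cf.\ the argument behind Theorem 1 of \cite{PG4MT}). Hence $\|\pmb\xi_{i1k}\|^2$ equals a ratio whose numerator is $\sum_{j\in\mathcal J_k}$ of squared eigenvector coordinates of $\bbK_n(l_i)$ and whose denominator is $-\,l_i\cdot$ (derivative of the relevant eigenvalue branch), which by the resolvent calculus is $1 + d_k^2(\underline m_1(\psi_k)+\psi_k\underline m_2(\psi_k))$ in the limit.

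The core of the argument is then a fluctuation analysis of the bilinear/quadratic forms appearing in \eqref{eq:Mmat}. Concentration of $\frac1n\bbY_2^*\bbD_2^2\bbY_2$ around its deterministic equivalent gives, for $z$ near $\psi_{nk}$, that $\bbK_n(z)$ concentrates around $\big(z\,\underline m_{n1}(z) + z^2\,\underline m_{n2}(z)\big)^{-1}$ times $d_k^2 I_{m_k}$ on the $\mathcal J_k$-block — this pins down the centering term $\big(1+d_k^2(\underline m_{n1}(\psi_{nk})+\psi_k\underline m_{n2}(\psi_{nk}))\big)^{-1}$. For the $\sqrt n$-scaled fluctuation I would write
\begin{equation}\label{eq:expand}
\sqrt n\Big(\|\pmb\xi_{i1k}\|^2 - \text{(centering)}\Big) = -\,\frac{\sqrt n\,\big(\,\mathrm{tr}_{\mathcal J_k}\bbK_n(\psi_{nk}) - d_k^2 m_k\rho_{nk}\,\big)}{\big(1+d_k^2\rho_k\big)^2}(1+o_P(1)) + (\text{eigenvalue-fluctuation term}),
\end{equation}
where $\rho_{nk}=\underline m_{n1}(\psi_{nk})+\psi_k\underline m_{n2}(\psi_{nk})$, and where the second term comes from replacing the random argument $l_i(\bbB)$ by $\psi_{nk}$ and is handled by the CLT for $\sqrt n(l_i/\psi_{nk}-1)$ from \cite{PG4MT}. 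The quantity $\mathrm{tr}_{\mathcal J_k}\bbK_n(\psi_{nk})$ is a smooth function of quadratic forms $\frac1n\bbY_1^*(\cdot)\bbY_1$ and of linear spectral statistics of $\frac1n\bbY_2^*\bbD_2^2\bbY_2$; its CLT follows from the PG4MT applied exactly as in the proof of the spiked-eigenvalue CLT, and the limiting variance assembles from the martingale-difference decomposition of the quadratic form into the Gaussian part $\frac{2}{m_k}\sigma_k$ and the fourth-cumulant part $\frac{\kappa_{x,k}}{m_k^2}\rho_k^2$, yielding $\theta_k = d_k^4\big(\frac{2}{m_k}\sigma_k + \frac{\kappa_{x,k}}{m_k^2}\rho_k^2\big)$ after dividing by $(1+d_k^2\rho_k)^4$.

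I expect the main obstacle to be two-fold. The first difficulty is controlling the non-block-diagonal structure of $\pmb\Sigma$: after the rotation by $\bbU$ the "spiked'' rows $\bbY_1$ and "bulk'' rows $\bbY_2$ are not independent of the singular-vector geometry, so the quadratic forms $\frac1n\bbY_1^* f\!\big(\frac1n\bbY_2^*\bbD_2^2\bbY_2\big)\bbY_1$ must be analyzed conditionally, and the fourth-moment contributions route through the $u_{it}$ weights precisely as encoded in Assumption \ref{aU} — this is where the PG4MT (rather than the ordinary four-moment theorem) is essential, since the fourth moment of $x_{11}$ need not even exist. The second difficulty is the joint fluctuation of $l_i(\bbB)$ and of $\mathrm{tr}_{\mathcal J_k}\bbK_n(\psi_{nk})$: the substitution of $l_i$ by $\psi_{nk}$ in \eqref{eq:expand} is only legitimate if one tracks the cross-covariance, but a careful accounting (again via the resolvent identity $\bbK_n(l_i)v = v$ for the relevant eigenvector, differentiated in $z$) shows the eigenvalue-fluctuation term contributes only at lower order to $\|\pmb\xi_{i1k}\|^2$ because the derivative of the projection with respect to the eigenvalue is $O_P(n^{-1/2})$ in this normalization. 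Establishing that cancellation rigorously — i.e.\ that only the trace term in \eqref{eq:expand} survives — will be the most delicate part of the argument. The remaining steps (tightness, identification of the Gaussian limit via Lévy's continuity theorem on characteristic functions of the martingale-difference sums, and verification that $\underline m_{nj}\to\underline m_j$ with the $\psi_{nk}$-for-$\psi_k$ substitution controlling the slow convergence $c_n\to c$, $H_n\to H$) are routine and parallel the corresponding arguments in \cite{PG4MT} and \cite{2021jiangbai}.
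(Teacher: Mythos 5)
Your proposal follows essentially the same route as the paper's own proof: truncation, a Schur-complement (deflation) reduction of the eigen-equation to the $M\times M$ companion matrix built from $\bbD_1\bbU_1^*\bbX$ and the resolvent of $\frac{1}{n}\bbX^*\bbU_2\bbD_2^2\bbU_2^*\bbX$, identification of the centering $\bigl(1+d_k^2(\underline{m}_{n1}+\psi_{nk}\underline{m}_{n2})\bigr)^{-1}$ through the squared-resolvent quadratic form and the unit-norm constraint, Gaussianization of the bulk rows via the PG4MT, a quadratic-form CLT whose variance splits into the Gaussian part $\frac{2}{m_k}\sigma_k$ and the fourth-cumulant part $\frac{\kappa_{x,k}}{m_k^2}\rho_k^2$, and finally the delta method giving the fourth power in the denominator. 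Your treatment of the replacement of $l_i$ by $\psi_{nk}$ (arguing the eigenvalue-fluctuation term is of lower order) corresponds to the paper's use of the $O_p$ bounds in its expansions and of the Bai--Silverstein-type estimate for the trace term, so the sketch is consistent with the published argument.
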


\begin{remark}
	Because the convergence of $\left\| \pmb{\xi}_{i1k}\right\| ^2$ to $1/( 1+d_k^2( \underline{m}_1(\psi_k)+\psi_k \underline{m}_2(\psi_k))) $ can be arbitrarily slow, we have to change the center to  $1/( 1+d_k^2( \underline{m}_{n1}(\psi_{nk})+\psi_{nk} \underline{m}_{n2}(\psi_{nk}))) $, where $\psi_{nk}=\psi_n(d_k^2)$ and $\underline{m}_{n1}(z)=\underline{m}_n(z)$ which is defined similarly as $m_n$. And $\underline{m}_{n2}(z)=\underline{m}\textquotesingle_{n1}(z)$.
	Moreover, it is noteworthy that if the 4th moment does not exist, we only need all $\kappa_{x,k}$ are finite.  when $\pmb{\Sigma}=\bbI+\sum_{i=1}^{r}(d_i^2-1)\bbv_i\bbv_i^*$ with $d_1^2 >\dots>d_r^2$, the model reduces to the one investigated in \cite{2020baoding}. In this study, the result is the joint distribution of the extreme eigenvalues and the generalized components of the associated eigenvectors.
\end{remark}

As a corollary of Theorem \ref{th2}, our second result is the convergence for the spiked eigenvector for a general case.

\begin{corollary} \label{th1}
	Under the same assumptions as Theorem \ref{th2}, then the following convergence hold:
	\begin{equation}\label{limit}
		\left\| \pmb\xi_{i1k}\right\| ^2 \xrightarrow{P} \dfrac{1}{1+d_k^2(\underline{m}_1(\psi_k)+\psi_k\underline{m}_2(\psi_k))}.
	\end{equation}
\end{corollary}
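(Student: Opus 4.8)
\textbf{Proof proposal for Corollary \ref{th1}.}

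The plan is to obtain \eqref{limit} as an immediate consequence of the weak convergence established in Theorem \ref{th2}. The key observation is that Theorem \ref{th2} already asserts that
$\sqrt{n}\bigl(\|\pmb\xi_{i1k}\|^2 - a_{nk}\bigr)$ converges in distribution to a centered Gaussian, where
$a_{nk} := 1/\bigl(1+d_k^2(\underline{m}_{n1}(\psi_{nk})+\psi_k\underline{m}_{n2}(\psi_{nk}))\bigr)$ is the finite-$n$ centering term. Dividing by $\sqrt{n}$ and applying Slutsky's theorem, $\|\pmb\xi_{i1k}\|^2 - a_{nk} \xrightarrow{P} 0$. Thus the whole content of the corollary reduces to showing that the deterministic sequence $a_{nk}$ converges to the claimed limit $a_k := 1/\bigl(1+d_k^2(\underline{m}_1(\psi_k)+\psi_k\underline{m}_2(\psi_k))\bigr)$, after which a second application of Slutsky (adding the deterministic convergent sequence $a_{nk}\to a_k$) gives $\|\pmb\xi_{i1k}\|^2 \xrightarrow{P} a_k$.

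For the convergence $a_{nk}\to a_k$, I would argue as follows. Since $c_n\to c$ (Assumption \ref{p/n}) and $H_n\to H$ weakly (Assumption \ref{aT}), and since by Assumption \ref{distant} the spike $d_k^2$ lies a fixed distance outside the support of $H$, the function $t\mapsto t/(d_k^2-t)$ and its derivatives are bounded and continuous on a neighborhood of $\mathrm{supp}(H)$; hence $\psi_{nk}=\psi_n(d_k^2)\to\psi(d_k^2)=\psi_k$ by \eqref{psink} and \eqref{psi}. Likewise, $\underline{F}_n$ (the ESD of $\bbX^*\bbU_2\bbD_2^2\bbU_2\bbX/n$) converges to its LSD $\underline{F}$, and since $\psi_k$ lies outside $\mathrm{supp}(\underline{F})$ (this is essentially the separation/outlier property guaranteed by $\psi'(d_k^2)>0$ together with the relation between $\psi$ and the support of $\underline F$, as in Proposition 2.1 of \cite{2021jiangbai}), the Stieltjes-transform-type integrals $\underline{m}_{nj}(z)=\int (x-z)^{-j}\,\mathrm{d}\underline{F}_n(x)$ are integrals of bounded continuous functions in a neighborhood of $z=\psi_{nk}$, so $\underline{m}_{n1}(\psi_{nk})\to\underline{m}_1(\psi_k)$ and $\underline{m}_{n2}(\psi_{nk})\to\underline{m}_2(\psi_k)$. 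Plugging these into the formula for $a_{nk}$ and using continuity of $x\mapsto 1/(1+x)$ away from $x=-1$ yields $a_{nk}\to a_k$; one also notes the denominator stays bounded away from zero because $a_k\in(0,1]$ is a genuine (nonzero) limiting projection length.

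The main obstacle is not the probabilistic part, which is routine Slutsky bookkeeping, but making rigorous the claim that $\psi_{nk}$ (and $\psi_k$) stay in a fixed neighborhood of the complement of $\mathrm{supp}(\underline F_n)$ (resp. $\mathrm{supp}(\underline F)$) uniformly in $n$, so that the integrands $(x-\psi_{nk})^{-j}$ are uniformly bounded and the weak convergence $\underline F_n\to\underline F$ can be upgraded to convergence of these moments. This is exactly where Assumption \ref{distant}, specifically $\psi'(d_k^2)>0$, and Proposition 2.1 of \cite{2021jiangbai} enter: they guarantee $d_k^2$ is a ``distant'' spike whose image $\psi_k$ is separated from the bulk spectrum, with a gap that is bounded below uniformly. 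Once that separation is in hand, the interchange of limit and integral is justified by dominated convergence (or by the fact that $z\mapsto\underline m_{nj}(z)$ is analytic and locally uniformly convergent off the support), and the corollary follows.
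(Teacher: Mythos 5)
Your proposal is correct and takes essentially the same route as the paper: the paper also deduces Corollary \ref{th1} directly from Theorem \ref{th2}, noting that convergence in distribution to a constant implies convergence in probability. The only difference is that you explicitly verify the deterministic centering $1/\bigl(1+d_k^2(\underline{m}_{n1}(\psi_{nk})+\psi_{nk}\underline{m}_{n2}(\psi_{nk}))\bigr)$ converges to the limiting value via $c_n\to c$, $H_n\to H$ and the separation of $\psi_k$ from the bulk, a step the paper leaves implicit.
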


\begin{remark}
	It is worth mentioning that we can also consider the case where the spiked eigenvalues of $\pmb{\Sigma}$ are unbounded. If $d_k^2 \to \infty$, since $\psi(d_k^2)=d_k^2( 1+c\int\dfrac{t}{d_k^2-t}\text{d}H(t))$, then $\dfrac{d_k^2}{\psi(d_k^2)}\xrightarrow{P} 1$. By (\ref{limit}), we have $\left\| \pmb\xi_{i1k}\right\| ^2 \xrightarrow{P} 1$. The same conclusion can also be obtained from (\ref{u2}). 
\end{remark}

\begin{remark}
	In this paper, we consider the case where the 4th moment may unnecessarily exist, as indicated by Assumption \ref{tail}. Moreover, it should hold that $\left\| \pmb\xi_{i1k}\right\| ^2 \xrightarrow{a.s.} \dfrac{1}{1+d_k^2(\underline{m}_1(\psi_k)+\psi_k\underline{m}_2(\psi_k))}$ under the bounded 4th moment assumption. However, we do not pursue this under Assumption \ref{tail} because our main focus is on the CLT of extreme eigenvectors.  
\end{remark}

\section{Statistical inference}\label{appl}

In this section, we apply our results to a statistical application. We focus our discussion on the hypothesis testing regarding the eigenspaces of covariance matrices. The eigenspaces of covariance matrices hold significant importance in various statistical methodologies and computational algorithms, particularly in the fields of data analysis and machine learning. 

In this section, we consider a generic index set $\mathcal{J}\subset\{1, \dots, M\}$, which shall be regarded as an extension of $\mathcal{J}_k$. Further, we set 
\begin{equation}\label{Z0}
	Z_0=\sum_{i\in \mathcal{J}}{\bbv_i\bbv_i^*}.
\end{equation}

Specifically, researchers are particularly interested in testing the following hypothesis:
\begin{equation}\label{test}
	H_0 : Z=Z_0 \ \ \ \ \text{vs}\ \ \ \  H_1:Z\neq Z_0
\end{equation}
for a given projection $Z_0$ defined in (\ref{Z0}). Next, we propose accurate and powerful statistics for the testing problem (\ref{test}). For (\ref{test}), we construct test statistics using the plug-in estimators:
\begin{equation}\label{T}
	\mathcal{T}:=\sum_{i\in \mathcal{J}}\left( \langle\bbv_i,\sum_{i\in \mathcal{J}}\bbf_i\bbf_i^*\bbv_i\rangle-\nu(\widehat{d_i^2})\right) ,
\end{equation}	
where $$\nu(d)=\dfrac{1}{1+d^2(\underline{m}_1(\psi)+\psi_k\underline{m}_2(\psi))}$$
and $\widehat{d_k^2}=\gamma(l_i), i\in\mathcal{J}$  is a nonlinear shrinkers of the sample eigenvalues in (\ref{estimate}). 

In fact, the LSD of $\dfrac{1}{n}\bbX^*\bbU_2\bbD_2^2\bbU_2^*\bbX$ is approximately the same as the one of $\dfrac{1}{n}\bbX^*\bbU\bbD^2\bbU^*\bbX$ because of the number of spikes is fixed. Then, we obtain a good estimator of $m(l_i)$ as below
$$\hat{m}(l_i)=\dfrac{1}{p-\mathcal{J}_0}\sum_{j\notin\mathcal{J}_0;j=1}^p \dfrac{1}{l_j-l_i},$$
where we define $r_{ij}=|l_i-l_j|/\max(l_i,l_j)$ and $\mathcal{J}_0=\{i:r_{ij}\leq0.2 \ \text{for any} \ j=1,\dots,p\}$, $\tilde{c}=(p-\mathcal{J}_0)/n$. The chosen setting $\mathcal{J}_0$ aims to mitigate the impact of multiple roots, as their presence can lead to inaccurate estimations of population spikes. By carefully selecting the setting, we can minimize the interference caused by multiple roots and improve the accuracy of our estimations for population spikes. The constant 0.2 serves as a suitable threshold value of the ratio, as indicated by the simulated results in \cite{2021jiangbai}. Moreover, we adopt $$\hat{\underline{m}}(l_i)=-\dfrac{1-\tilde{c}}{l_i}+\tilde{c}\hat{m}(l_i).$$ Then, we get the estimator of $d_k^2$, which can be represented as 
\begin{equation}\label{estimate}
	\widehat{d_k^2}=\gamma(l_i)=-\dfrac{1}{\hat{\underline{m}}(l_i)}, i\in\mathcal{J}_k.
\end{equation}

Next, for simplicity, we will mainly work with a single spike model ($M=1$). Then, for test (\ref{test}) and by (\ref{T}), we study the test statistics
\begin{equation}\label{T1}
	\mathcal{T}_1:=\dfrac{1}{d_1^2}\left( \langle\bbv_1,\bbf_1\rangle^2-\nu(\widehat{d_1^2})\right) .
\end{equation}
We remark here that the spike $d_1^2$ can be unbounded.

We record the results regarding the asymptotic distribution of (\ref{T1}) in the following theorem and postpone its proof to section \ref{proof}.

\begin{theorem}\label{th3}
	Under the existence of the 4th moment, then
	$$\dfrac{\sqrt{n}\mathcal{T}_1}{\sqrt{\vartheta(d_1^2)}} \xrightarrow{D}N(0,1),$$
	where $\vartheta(d_1^2)=\tau(d_1^2)\tilde{G}(\psi_1)$, the definitions of the $\tilde{G}(\psi_1)$ and the function $\tau(d_1^2)$ are given in Section \ref{proof}.
\end{theorem}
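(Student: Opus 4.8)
\textbf{Proof proposal for Theorem \ref{th3}.}
The plan is to reduce $\mathcal{T}_1$ to a linear combination of the two quantities whose joint fluctuations are already controlled, namely $\langle\bbv_1,\bbf_1\rangle^2$ (governed by Theorem \ref{th2} with $M=1$, $m_1=1$, $K=1$) and the plug-in estimator $\widehat{d_1^2}=\gamma(l_1)$ built from the sample eigenvalues. The first step is a delta-method expansion of $\nu(\widehat{d_1^2})$ around $\nu(d_1^2)$: writing $\widehat{d_1^2}-d_1^2=O_P(n^{-1/2})$ (which follows from Proposition 2.1 of \cite{2021jiangbai} together with the CLT for spiked eigenvalues in \cite{PG4MT}, after verifying that the shrinker $\gamma(\cdot)$ in (\ref{estimate}) is a consistent and asymptotically linear estimator of $d_1^2$), I would get
\begin{equation*}
	\mathcal{T}_1=\dfrac{1}{d_1^2}\Big(\langle\bbv_1,\bbf_1\rangle^2-\nu(d_1^2)\Big)-\dfrac{\nu'(d_1^2)}{d_1^2}\big(\widehat{d_1^2}-d_1^2\big)+o_P(n^{-1/2}).
\end{equation*}
Since $\nu(d_1^2)$ coincides (for $M=1$) with the centering $1/(1+d_1^2(\underline{m}_1(\psi_1)+\psi_1\underline{m}_2(\psi_1)))$ of Theorem \ref{th2}, the first bracket is exactly the object whose $\sqrt{n}$-fluctuation Theorem \ref{th2} describes; the hard part is that Theorem \ref{th2} centers at $\nu_n$ (the $\psi_{nk}$-version) rather than $\nu(d_1^2)$, so I must absorb the deterministic bias $\sqrt{n}(\nu_n(d_1^2)-\nu(d_1^2))$ — and likewise the bias in $\widehat{d_1^2}$ — and show these cancel, or are themselves $o(1)$ once both terms are written relative to the same finite-$n$ centering $\psi_{n1}$ and $H_n,c_n$.

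The second step is to establish the \emph{joint} CLT of the pair $\big(\sqrt{n}(\langle\bbv_1,\bbf_1\rangle^2-\nu_n),\ \sqrt{n}(\widehat{d_1^2}-d_1^2)\big)$. Both coordinates are, after the truncation and PG4MT machinery set up in Section \ref{proof}, linear functionals of the same underlying bilinear/quadratic forms in the noise matrix $\bbX$ (the resolvent of $\bbX^*\bbU_2\bbD_2^2\bbU_2^*\bbX/n$ evaluated near $\psi_1$, plus the spike direction $\bbU_1$), so their joint Gaussianity follows from the same martingale-CLT argument used to prove Theorem \ref{th2}; what remains is to compute the $2\times2$ asymptotic covariance. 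I would extract the covariance between the eigenvalue fluctuation and the eigenvector-projection fluctuation from the common resolvent expansion — this is where the derivatives $\underline{m}_j(\psi_1)$, $j=1,2,3,4$, and the parameter $\kappa_{x,1}$ enter — and then collapse the $2\times2$ quadratic form $\mathrm{Var}(a_1 Z_1 + a_2 Z_2)$ with $a_1=1/d_1^2$, $a_2=-\nu'(d_1^2)/d_1^2$ into a single scalar. This scalar is what the statement packages as $\vartheta(d_1^2)=\tau(d_1^2)\tilde G(\psi_1)$: $\tilde G(\psi_1)$ is the ``noise-side'' covariance structure built from $\underline{m}_j(\psi_1)$ and $\kappa_{x,1}$, and $\tau(d_1^2)$ collects the $d_1^2$-dependent prefactors coming from $a_1,a_2$ and $\nu'$.

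The third step is standardization: divide by $\sqrt{\vartheta(d_1^2)}$, invoke the Cramér–Wold device on the joint CLT from step two, and conclude $\sqrt{n}\,\mathcal{T}_1/\sqrt{\vartheta(d_1^2)}\xrightarrow{D}N(0,1)$; I would also note that the argument is uniform enough that $d_1^2\to\infty$ is allowed, since in that regime $\nu(d_1^2)\to1$ and one tracks the rescaled quantities as in the remark after Corollary \ref{th1}. The main obstacle I anticipate is \emph{not} the Gaussianity — that is inherited wholesale from the proof of Theorem \ref{th2} — but the bookkeeping in steps one and two: showing that replacing $\widehat{d_1^2}$ and the various $\underline{m}_{n,j}(\psi_{n1})$ by their population limits, and the delta-method remainder in $\nu(\cdot)$, are all genuinely $o_P(n^{-1/2})$ (requiring $\psi'(d_1^2)>0$ from Assumption \ref{distant} to control the inverse map in $\gamma$), and correctly pairing the cross-covariance term so that the final variance is the stated $\tau(d_1^2)\tilde G(\psi_1)$ rather than an uncancelled sum of three pieces. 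A secondary technical point is that $\widehat{m}(l_1)$ in (\ref{estimate}) uses the \emph{sample} eigenvalues with the $\mathcal{J}_0$-exclusion, so I must argue that this empirical average differs from $\underline{m}_{n1}(\psi_{n1})$ by $o_P(n^{-1/2})$ — leaning on the rigidity/no-eigenvalue-outside-support input and the fact that only $O(1)$ eigenvalues are excluded.
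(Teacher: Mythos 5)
Your first step (adding and subtracting $\nu(d_1^2)$ and a delta-method expansion of $\nu(\widehat{d_1^2})$, giving $\sqrt{n}\mathcal{T}_1=\frac{\sqrt{n}(\langle\bbv_1,\bbf_1\rangle^2-\nu(d_1^2))}{d_1^2}-\frac{\nu\textquotesingle(d_1^2)}{d_1^2}\sqrt{n}(\widehat{d_1^2}-d_1^2)$ up to lower order) is exactly how the paper begins. From that point on, however, the paper does something much shorter and quite different from your plan: it asserts that the asymptotic distribution of $\sqrt{n}\mathcal{T}_1$ is determined by the $\sqrt{n}(\widehat{d_1^2}-d_1^2)$ term alone, and imports the CLT for that spike estimator wholesale from Theorem 3.2 of \cite{2012dingbai}; the variance $\vartheta(d_1^2)=\tau(d_1^2)\tilde{G}(\psi_1)$ is then nothing more than $(\nu\textquotesingle(d_1^2))^2$ times the limiting variance of $\widehat{d_1^2}$, with $\tilde{G}(\psi_1)$ coming from the fluctuation $R_{11}(\psi_1)$ of the sample spike $l_1$ and the remaining factor in $\tau$ from the derivative of the map $l_1\mapsto\widehat{d_1^2}=-1/\hat{\underline{m}}(l_1)$. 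No joint CLT of the eigenvector projection and the eigenvalue estimator, and no cross-covariance computation, appears anywhere in the paper.

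This is where your proposal has a genuine gap with respect to the stated theorem. You keep both terms and plan a two-dimensional CLT plus a Cram\'er--Wold collapse, but you never supply the one ingredient that would make the answer come out as $\tau(d_1^2)\tilde{G}(\psi_1)$: an argument that the eigenvector-projection term $\frac{\sqrt{n}(\langle\bbv_1,\bbf_1\rangle^2-\nu(d_1^2))}{d_1^2}$ contributes nothing extra to the limiting variance (by negligibility, by exact cancellation against the cross term, or by complete asymptotic correlation with the eigenvalue term). By Theorem \ref{th2} that term is $O_P(1)$ with nondegenerate variance after the appropriate recentering, so a naive joint computation generically yields a three-piece variance, not the stated $\vartheta(d_1^2)$ --- precisely the ``uncancelled sum of three pieces'' you flag as an anticipated obstacle but do not resolve. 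In other words, the hard step is not the Gaussianity (which, as you say, can be inherited from the machinery behind Theorem \ref{th2}) but the identification of the limit with the law of $-\frac{\nu\textquotesingle(d_1^2)}{d_1^2}\sqrt{n}(\widehat{d_1^2}-d_1^2)$; the paper settles this (tersely) by invoking \cite{2012dingbai}, whereas your route leaves it as an open bookkeeping problem, together with the unproved $o_P(n^{-1/2})$ claims about replacing $\hat{\underline{m}}(l_1)$ and the finite-$n$ centerings by their population limits. As written, your proposal is a plausible but unfinished program rather than a proof of the theorem.
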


By Theorem \ref{th3} and Theorem 4.1 in \cite{2012baiyao}, we can construct a pivotal statistic. Denote
\begin{equation}\label{TT1}
	\mathcal{T}_2=\dfrac{\sqrt{n}\mathcal{T}_1}{\sqrt{\vartheta(\widehat{d_1^2})}},\ \widehat{d_1^2}=-\dfrac{1}{\hat{\underline{m}}(l_1)}.
\end{equation}
We mention that (\ref{TT1}) is adaptive to the $d_1^2$ by using its estimators (\ref{estimate}). We summarize the distribution of $\mathcal{T}_2$ in the corollary below, whose proof will also be postponed to Section \ref{proof}.

\begin{corollary}\label{corollary}
	Under the assumptions of Theorem \ref{th3}, we have that
	$$\mathcal{T}_2\xrightarrow{D} N(0,1).$$
\end{corollary}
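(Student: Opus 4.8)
The plan is to derive Corollary \ref{corollary} from Theorem \ref{th3} by a standard Slutsky argument, the only nontrivial ingredient being the consistency of the plug-in estimator $\widehat{d_1^2}$ for $d_1^2$ (or more precisely, the continuity-at-$d_1^2$ of the variance functional $\vartheta$ together with the consistency of its argument). First I would recall from Theorem \ref{th3} that $\sqrt{n}\,\mathcal{T}_1/\sqrt{\vartheta(d_1^2)}\xrightarrow{D}N(0,1)$, so it suffices to show that $\vartheta(\widehat{d_1^2})/\vartheta(d_1^2)\xrightarrow{P}1$, since then
\begin{equation*}
	\mathcal{T}_2=\frac{\sqrt{n}\,\mathcal{T}_1}{\sqrt{\vartheta(\widehat{d_1^2})}}=\frac{\sqrt{n}\,\mathcal{T}_1}{\sqrt{\vartheta(d_1^2)}}\cdot\sqrt{\frac{\vartheta(d_1^2)}{\vartheta(\widehat{d_1^2})}}\xrightarrow{D}N(0,1)
\end{equation*}
by Slutsky's theorem. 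So the argument reduces to two steps: (i) $\widehat{d_1^2}\xrightarrow{P}d_1^2$, and (ii) $\vartheta(\cdot)$ is continuous and bounded away from $0$ in a neighborhood of $d_1^2$, so that the continuous mapping theorem gives $\vartheta(\widehat{d_1^2})\xrightarrow{P}\vartheta(d_1^2)$.

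For step (i), I would invoke the consistency of the nonlinear shrinker $\gamma(\cdot)$ defined in (\ref{estimate}). The estimator $\widehat{d_1^2}=-1/\hat{\underline m}(l_1)$ is built from $\hat{m}(l_1)=\frac{1}{p-\mathcal{J}_0}\sum_{j\notin\mathcal{J}_0}1/(l_j-l_1)$, which is a Stieltjes-transform-type average of the non-spiked sample eigenvalues evaluated at the spiked sample eigenvalue $l_1$. Using that $l_1/\psi_1\xrightarrow{a.s.}1$ (equivalently $l_1\to\psi_1=\psi(d_1^2)$ when $d_1^2$ is bounded; in the unbounded case one argues with $l_1/\psi_{n1}$) together with the a.s. convergence of the ESD of the non-spiked part to $\underline F$ and the exact separation of $l_1$ from the bulk (Assumption \ref{distant}, Proposition 2.1 in \cite{2021jiangbai}), one gets $\hat{\underline m}(l_1)\xrightarrow{P}\underline m(\psi_1)$ and hence, since $\underline m(\psi_1)=-1/d_1^2\neq 0$ by the defining relation of $\psi$, that $\widehat{d_1^2}=-1/\hat{\underline m}(l_1)\xrightarrow{P}d_1^2$. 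This is essentially Theorem 4.1 of \cite{2012baiyao} (cited in the statement) adapted to the generalized model via \cite{2021jiangbai}; I would cite these rather than reprove the estimator's consistency.

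For step (ii), I would unpack $\vartheta(d_1^2)=\tau(d_1^2)\tilde G(\psi_1)$ from the definitions given in Section \ref{proof}: both $\tau$ and $\tilde G\circ\psi$ are finite compositions of the integrals $\underline m_j(\psi_1)=\int (x-\psi_1)^{-j}\,d\underline F(x)$, $j=1,\dots,4$, with rational/polynomial operations, and $\psi(x)=x(1+c\int\frac{t}{x-t}dH(t))$ is itself smooth for $x$ outside the support of $H$. Since $d_1^2$ lies strictly outside $\mathrm{supp}(H)$ (Assumption \ref{distant}) and $\psi_1$ strictly outside $\mathrm{supp}(\underline F)$, all these integrals are finite and depend continuously (indeed real-analytically) on the base point in a neighborhood of $d_1^2$; moreover $\psi'(d_1^2)>0$ is assumed, so $\psi$ is a local diffeomorphism there. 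Hence $\vartheta$ is continuous at $d_1^2$, and provided $\vartheta(d_1^2)>0$ — which holds because it is the variance appearing in the nondegenerate limit of Theorem \ref{th3} — it is bounded away from zero on a small neighborhood. Combining with (i) and the continuous mapping theorem yields $\vartheta(\widehat{d_1^2})\xrightarrow{P}\vartheta(d_1^2)>0$, and the Slutsky display above completes the proof.

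The main obstacle I anticipate is not the Slutsky mechanics but controlling the estimator $\hat m(l_1)$ uniformly near the (possibly unbounded) spike: because $l_1$ is random and the summands $1/(l_j-l_1)$ are sensitive to the gap between $l_1$ and the edge of the bulk, one needs the exact-separation phenomenon and a quantitative handle on $\min_{j\notin\mathcal{J}_0}|l_j-l_1|$ to pass from a.s. eigenvalue convergence to convergence of the average; the trimming set $\mathcal{J}_0$ with the $0.2$ threshold is precisely what makes this work, and I would lean on the corresponding estimates in \cite{2021jiangbai}. In the unbounded-spike regime one additionally checks that the ratios $\widehat{d_1^2}/d_1^2$ and $\vartheta(\widehat{d_1^2})/\vartheta(d_1^2)$ — rather than the differences — converge to $1$, which is the appropriate formulation and is already the way Theorem \ref{th2} and its remarks are phrased.
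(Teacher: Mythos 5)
Your proposal is correct and follows essentially the same route as the paper's own (very terse) proof: Slutsky's theorem applied to Theorem \ref{th3} together with the consistency of $\widehat{d_1^2}$ via Theorem 4.1 of \cite{2012baiyao} (and the continuity/positivity of $\vartheta$ at $d_1^2$, which the paper leaves implicit). Your write-up simply fills in the details the paper asserts in one line, so no substantive difference in approach.
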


\section{Simulation}\label{s}
This section conducts simulations to demonstrate the performance of our result. For simplicity, we only give the results of two cases of the population covariance matrix. We generate samples based on sample size $n=1000$ when $p=100$. They are detailed below:

\textit{Case \uppercase\expandafter{\romannumeral1}}: Let $\bbT$ = diag$(4, 3, 3, 0.2, 0.2, 0.1, 1, \dots, 1)$ is a finite-rank perturbation of matrix $\bbI_p$ with the spikes (4, 3, 0.2, 0.1) of the multiplicity (1, 2, 2, 1), where the Assumption \ref{aU} is satisfied.

\textit{Case \uppercase\expandafter{\romannumeral2}}: Let $\bbT=\pmb{\Gamma\Lambda\Gamma^*}$ is a general positive definite matrix, where $\pmb{\Lambda}$ = diag $(4, 3, 0.2, 0.1, 0, \dots, 0)$ and $\pmb{\Gamma}$ is the following matrix

$$\begin{pmatrix}
	1 & \tau & 0 & \cdots & 0\\
	\tau & 1 & \tau & \cdots & 0\\
	\vdots & \vdots & \vdots & \ddots & \vdots\\
	0 & 0 & \cdots & \tau & 1
\end{pmatrix}
,$$
where $\tau=0.5$.

For each case, \{$x_{ij}$\} are i.i.d. samples from standard Gaussian population. As described above, we have the spikes 4, 3, 0.2 and 0.1.

For the single population spikes $d_1^2=4$ of \textit{Case \uppercase\expandafter{\romannumeral1}} (see Figure \ref{Figggg1}), we get the limiting distributions
\begin{equation*}
	\sqrt{n}\left( \left\| \pmb\xi_{111}\right\| ^2-\mu_1 \right) \xrightarrow{D}N\left( 0,\sigma_1^2\right) ,
\end{equation*}
where $\mu_1=0.9496$, $\sigma_1=2.1786$.

For the spikes $d_2^2=3$ with multiplicity 2, we obtain that the two distributions

\begin{equation*}
	\begin{aligned}
		&\sqrt{n}\left( \left\| \pmb\xi_{212}\right\| ^2+\left\| \pmb\xi_{312}\right\| ^2-\mu_2 \right) \xrightarrow{D}N\left( 0,\sigma_2^2\right) ,\\
	\end{aligned}
\end{equation*}
where $\mu_2=0.9107$, $\sigma_2=1.7409$.

\begin{figure}
	\centering
	\subfigure[$d_1^2=4$]{\includegraphics[height=3.5cm, width=4cm]{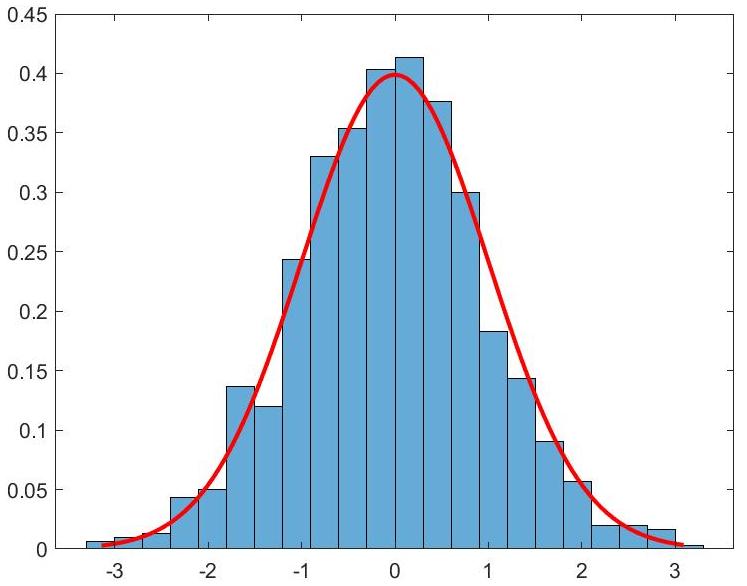}}
	\subfigure[$d_2^2=3$ with multiplicity 2]{\includegraphics[height=3.5cm, width=4cm]{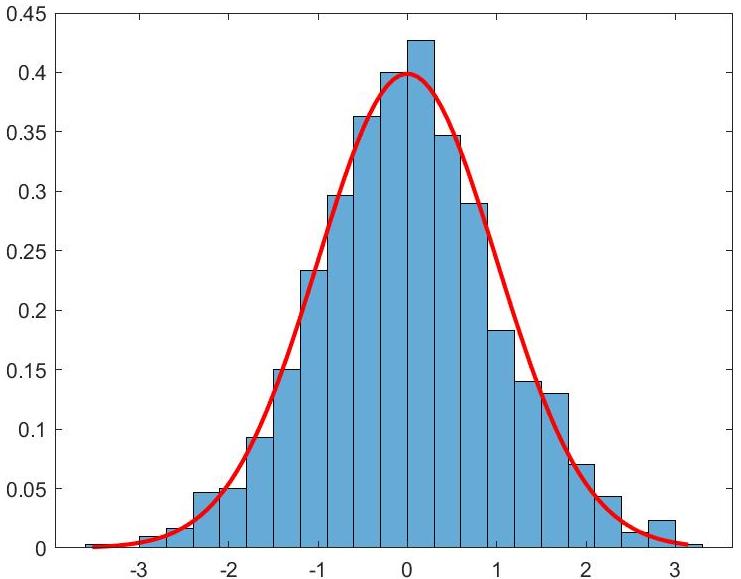}}
	\caption{For the \textit{Case \uppercase\expandafter{\romannumeral1}}, simulated empirical distributions for $\langle \bbv_1,\bbf_1\rangle^2$ under $d_1^2=4$ and $\langle\bbv_2,\bbf_2\rangle^2+\langle\bbv_3,\bbf_2\rangle^2$ under $d_2^2=3$. Here, we report our results based on 1000 simulations with Gaussian random variables.}\label{Figggg1}
\end{figure}
Similarly, for the \textit{Case \uppercase\expandafter{\romannumeral2}} (see Figure \ref{Figggg2}), using the Theorem \ref{th2} can calculate the limiting distributions.

\begin{figure}
	\centering
	\subfigure[$d_1^2=4$]{\includegraphics[height=3.5cm, width=4cm]{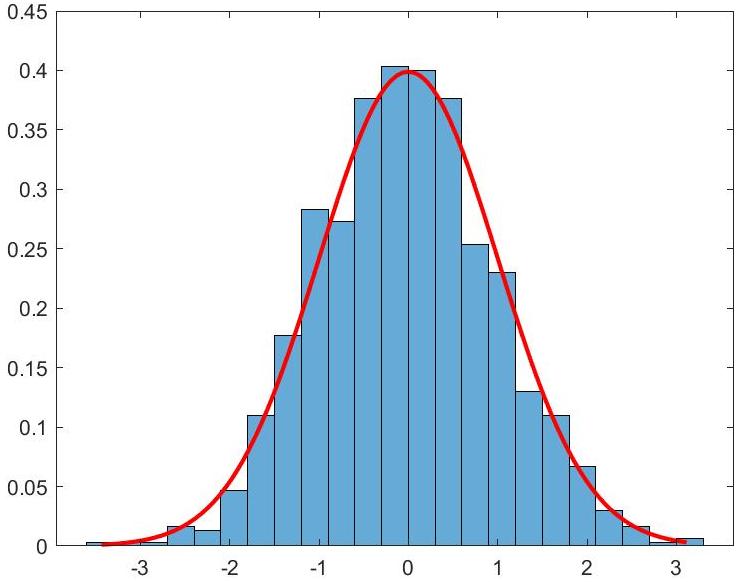}}
	\subfigure[$d_4^2=0.1$]{\includegraphics[height=3.5cm, width=4cm]{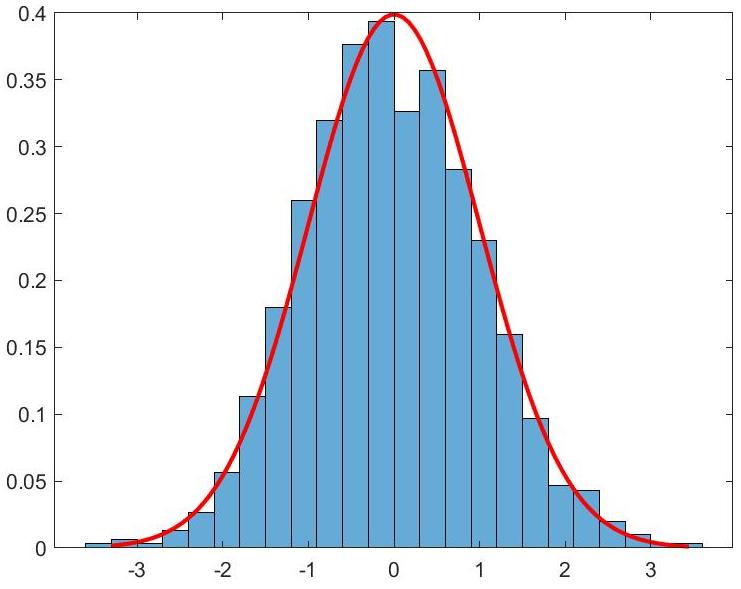}}
	\caption{For the \textit{Case \uppercase\expandafter{\romannumeral2}}, simulated empirical distributions for $\langle \bbv_1,\bbf_1\rangle^2$ and $\langle \bbv_4,\bbf_4\rangle^2$ under $d_1^2=4$ and $d_4^2=0.1$, respectively. Here, we report our results based on 1000 simulations with Gaussian random variables.}\label{Figggg2}
\end{figure}
As shown in the calculations and simulations, our approach extends the limiting distributions for the extreme sample eigenvalues to a generalized spiked population model.

Since $\mathbb{T}_1$ is asymptotically pivotal, we will use (\ref{TT1}) as our statistic to test (\ref{test}).
For the accuracy of the test, we focus on reporting the results with the type I error rate 0.05 under different values of $c=0.1,1,2$ and various choices of the spikes. We consider the hypothesis testing for the eigenspace of $\bbT\bbT^*=\text{diag}(d_1^2,2,\cdots,2,1,\cdots,1)$, where the null is $$Z_0=\bbe_1\bbe_1^*.$$
We will assume the standard Gaussian distribution of entries of $\bbX$. From Table \ref{size}, we observe that our proposed statistic (\ref{TT1}) yields accurate results for different values of $d_1^2$, and this accuracy is robust against different values of $c$.

\begin{table}
	\caption{\centering{Simulated type I error rates under the nominal level 0.05.}}
	\centering
	\begin{tabular}{ccccccc}
		\toprule
		& \multicolumn{3}{c}{$n=200$}&\multicolumn{3}{c}{$n=500$}\\
		\midrule
		&$c=0.1$&$c=1$&$c=2$&$c=0.1$&$c=1$&$c=2$\\
		\midrule
		$d_1^2=5$&0.0530&0.0480&0.0510&0.0530&0.0520&0.0410\\
		$d_1^2=10$&0.0550&0.0460&0.0500&0.0520&0.0510&0.0490\\
		$d_1^2=50$&0.0520&0.0585&0.0570&0.0540&0.0450&0.0520\\
		$d_1^2=100$&0.0530 &0.0545&0.0480&0.0525&0.0505&0.0490\\
		\bottomrule
	\end{tabular}
	\label{size}
\end{table}

Then we investigate the power of the above statistic under the alternative $Z=\bbe_4\bbe_4^*$ for $d_1^2=10$ and $d_1^2=100$ in Table \ref{power}. We report our results under the nominal level 0.05 based on 2000 simulations.

\begin{table}
	\caption{\centering{Power for $d_1^2=10$ and $d_1^2=100$ under the nominal level 0.05.}}
	\centering
	\begin{tabular}{ccccccc}
		\toprule
		& \multicolumn{3}{c}{$d_1^2=10$}&\multicolumn{3}{c}{$d_1^2=100$}\\
		\midrule
		& $n=100$ & $n=200$&$n=500$& $n=100$ & $n=200$&$n=500$\\
		
		\midrule
		$c=0.1$ &0.8610 &0.9060 &0.9390 &0.8780 &0.9120 &0.9390 \\
		$c=1$ &0.8450 &0.9020 &0.9300 &0.8600 &0.8930 &0.9400 \\
		$c=2$ &0.8310 &0.9050 &0.9310 &0.8550 &0.8910 &0.9300 \\
		\bottomrule
	\end{tabular}
	\label{power}
\end{table}

\section{Proofs of the main results}\label{proof}
In this section, we prove the main results in Section \ref{mainresults}.  
We first give some explanations of the truncation procedure as below. We can show that replacing the entries of $\bbX$ with the truncated and renormalized ones, as specified in Assumption \ref{tail}, produces equivalent results.

\subsection{Truncation}
Let $\hat{x}_{ij}=x_{ij}I(|x_{ij}|<\eta_n\sqrt{n})$ and $\tilde{x}_{ij}=(\hat{x}_{ij}-\rE\hat{x}_{ij})/\sigma_n$ with $\sigma_n^2=\rE|\hat{x}_{ij}-\rE\hat{x}_{ij}|^2$, where $\eta_n \to 0$ with a slow rate. 
Consider the truncated samples $\hat{x}_{ij}=x_{ij}I(|x_{ij}|<\eta_n\sqrt{n})$, according to Assumption \ref{tail}, we have
$$\begin{aligned}
	\text{P}\left( \bbB\neq\hat{\bbB}\right) &\leqslant \sum_{i,j}\text{P}\left( |x_{ij}|\geq\eta_n\sqrt{n}\right)\\
	&=np\text{P}\left( |x_{11}|\geq\eta_n\sqrt{n}\right) \to 0,\ \ \ \ as\ n,p\to\infty.
\end{aligned} $$

Next, define the truncated and renormalized sample covariance matrix as 
$$\tilde{\bbB}=\frac{1}{n}\bbT\tilde{\bbX}\tilde{\bbX}^*\bbT^*.$$
Then, by using the same approach and bounds that are used in the proof of Lemma 2.7 of Bai \cite{1999bai} and elementary calculation, we have, for each $j=1,2,\dots,n$,
$$\begin{aligned}
	\bbv_i^*\hat{\bbB}\bbv_i-\bbv_i^*\tilde{\bbB}\bbv_i&=\bbv_i^*\left( \sum_{j=1}^{n}\hat{l}_j\hat{\bbf}_j\hat{\bbf}_j^*\right) \bbv_i-\bbv_i^*\left( \sum_{j=1}^{n}\tilde{l}_j\tilde{\bbf}_j\tilde{\bbf}_j^*\right)\bbv_i\\
	&=\sum_{j=1}^{n}\hat{l}_j\langle \bbv_i, \hat{\bbf}_j\rangle ^2-\sum_{j=1}^{n}\tilde{l}_j\langle \bbv_i, \tilde{\bbf}_j\rangle ^2\\
	&=\sum_{j=1}^{n}\left( \hat{l}_j-\tilde{l}_j\right) \langle \bbv_i, \hat{\bbf}_j\rangle ^2+\sum_{j=1}^{n}\tilde{l}_j\left(\langle \bbv_i, \hat{\bbf}_j\rangle ^2-\langle \bbv_i, \tilde{\bbf}_j\rangle ^2 \right)\\
	&\leq  \left\| \tilde{\bbB}-\hat{\bbB}\right\|.
\end{aligned}$$

From Assumption \ref{tail} we have,
$$\begin{aligned}
	\left| \sigma_n^2-1\right| &=\left| E|\hat{x}_{ij}-E\hat{x}_{ij}|^2-E|x_{ij}|^2\right|\\
	&\leq 2E|x_{ij}|^2 I\left(|x_{ij}|\geq \eta_n\sqrt{n} \right)\\
	&=2\int_{\eta_n \sqrt{n}}^{\infty}x^2\text{d}\text{P}(|x_{11}|<x)= -2\int_{\eta_n \sqrt{n}}^{\infty}x^2\text{d}\text{P}(|x_{11}|\geq x)\\
	&=2\int_{\eta_n \sqrt{n}}^{\infty}2x\text{P}(|x_{11}|\geq x)\text{d}x=2\int_{\eta_n \sqrt{n}}^{\infty}\dfrac{2x^4\text{P}(|x_{11}|\geq x)}{x^3}\text{d}x\\
	&=o(1)\int_{\eta_n \sqrt{n}}^{\infty}\dfrac{1}{x^3}\text{d}x=o(n^{-1}).
\end{aligned}$$

Moreover, \begin{equation*}\label{Ex}
	\left| \text{E}\hat{x}_{11}\right|= \left| \int_{\eta_n \sqrt{n}}^{\infty} x \text{d}\text{P}\left( \left| x_{11}\right|\geq x \right)\right| =o\left( n^{-3/2}\right) ,
\end{equation*}
and

\begin{equation}\label{4th}
	\begin{aligned}
		| \text{E}\tilde{x}_{ij}^4|&=\text{E}\left\lbrace x_{ij}^4 I\left(|x_{ij}|< \eta_n\sqrt{n} \right)  \right\rbrace\\ &=-\int_{0}^{\eta_n \sqrt{n}} x^4\text{d}\text{P}\left( \left| x_{11}\right|\geq x \right)=\int_{0}^{\eta_n \sqrt{n}} \dfrac{4x^4\text{P}\left( \left| x_{11}\right|\geq x \right)}{x}\text{d}x\\
		&=\int_{0}^{1} 4x^3\text{P}\left( \left| x_{11}\right|\geq x \right)\text{d}x + \int_{1}^{\eta_n \sqrt{n}} \dfrac{4x^4\text{P}\left( \left| x_{11}\right|\geq x \right)}{x}\text{d}x\\
		&=O(1)+\int_{1}^{\eta_n \sqrt{n}}\dfrac{o(1)}{x}\text{d}x=O(\log n).
	\end{aligned}
\end{equation}	
These give us
$$\begin{aligned}
	\left\| \tilde{\bbB}-\hat{\bbB}\right\| &\leq \left\| \tilde{\bbX}-\hat{\bbX}\right\| \left(\left\|\tilde{\bbX}\right\| +\left\| \hat{\bbX}\right\|   \right) \\
	&\leq \left( \left\| \text{E}\hat{\bbX}\right\| +\left\| (1-\sigma_n)\tilde{\bbX}\right\| \right) \left( \left\|\tilde{\bbX}\right\| +\left\| \hat{\bbX}\right\|   \right)\\
	&\leq \dfrac{1}{\sqrt{n}}\left| \text{E}\hat{x}_{11}\right| \cdot \min(p,n)+\dfrac{1-\sigma_n^2}{1+\sigma_n}\left\| \tilde{\bbB}\right\| \\
	&=o_{a.s.}\left( n^{-1}\right) .
\end{aligned}$$

From the above estimates, we obtain
$$\langle \bbv_j, \bbf_j\rangle ^2 = \langle \bbv_j, \tilde{\bbf}_j\rangle ^2+o_p(1).$$

Therefore, we only need to consider the limiting distribution of the eigenvectors corresponding to the spiked eigenvalues of $\tilde{\bbB}$, which consists of the entries truncated at $\eta_n\sqrt{n}$, centralized and renormalized. For simplicity, in real cases, $|x_{ij}|<\eta_n\sqrt{n}$, $\rE x_{ij}=0$, and $\rE |x_{ij}^2|=1$ is equivalent to Assumption \ref{tail}. 

\subsection{Proof of Theorem \ref{th2}}
First, we will introduce the following Lemma \ref{cor}, which plays an important role in the proof. 

\begin{lemma}\label{cor}
	Suppose that Assumptions \ref{tail}-\ref{distant} hold. In addition, $\bbU_{1k}^*\bbX=\bbY_k\in\mathbb{R}^{m_k\times n}$ and the entries of $\bbU_2^*\bbX=\bbZ\in\mathbb{R}^{(p-M)\times n}$ can be assumed as i.i.d. $N(0,1)$. Then,
	\begin{equation}\label{trclt}
		\resizebox{0.9\hsize}{!}{$
		\begin{aligned}
			&\dfrac{d_k^2}{n^{3/2}} \left( \dfrac{1}{m_k}{\rm tr}\left( \bbY_k\bbZ^*\bbD_2\left( \frac{1}{n}\bbD_2\bbZ\bbZ^*\bbD_2-l_i\bbI_{p-M}\right) ^{-2}\bbD_2\bbZ\bbY_k^*\right)-{\rm tr}\left( \bbZ^*\bbD_2\left( \frac{1}{n}\bbD_2\bbZ\bbZ^*\bbD_2-l_i\bbI_{p-M}\right) ^{-2}\bbD_2\bbZ\right)\right)  
		\end{aligned}$}
	\end{equation}
	tends to normal distribution with mean 0 and variance $\theta_k=d_k^4\left( \dfrac{2}{m_k}\sigma_k+\dfrac{\kappa_{x}}{m_k^2}\rho_k^2\right) $, where $\kappa_{x,k}$ is given in (\ref{ux}), and the definition of $\sigma_k$ and $\rho_k$ are given in Theorem \ref{th2}.
\end{lemma}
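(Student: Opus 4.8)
The plan is to establish a joint CLT for the pair of linear–spectral-type statistics appearing in \eqref{trclt} by reducing the difference to a quadratic form in the rows of $\bbY_k$ whose coefficient matrix is built from $\bbZ$ and the resolvent of $\frac1n\bbD_2\bbZ\bbZ^*\bbD_2$. First I would freeze the randomness of $\bbZ$: condition on $\bbZ$ (equivalently on $\bbU_2^*\bbX$) and on the sample eigenvalue $l_i$, and write $\bbQ_i := \bbZ^*\bbD_2\bigl(\frac1n\bbD_2\bbZ\bbZ^*\bbD_2 - l_i\bbI_{p-M}\bigr)^{-2}\bbD_2\bbZ$, an $n\times n$ symmetric positive-semidefinite matrix depending only on $\bbZ$. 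Then the quantity in \eqref{trclt} is
$$
\frac{d_k^2}{n^{3/2}}\Bigl(\tfrac1{m_k}\operatorname{tr}\bigl(\bbY_k\bbQ_i\bbY_k^*\bigr) - \operatorname{tr}\bbQ_i\Bigr)
= \frac{d_k^2}{n^{3/2}}\sum_{r=1}^{m_k}\frac1{m_k}\bigl(\bby_{k,r}^*\bbQ_i\bby_{k,r} - \operatorname{tr}\bbQ_i\bigr),
$$
where $\bby_{k,r}$ are the rows of $\bbY_k = \bbU_{1k}^*\bbX$. By Assumption \ref{aU} and the orthogonality of the columns of $\bbU$, the $\bby_{k,r}$ are (asymptotically) independent across $r$ with i.i.d.\ $N(0,1)$ coordinates up to a vanishing correction controlled by $\kappa_{x,k}$; after the truncation of Section 4.1 one may treat the coordinates as bounded with the stated fourth-moment behaviour. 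The core is then a CLT for the centered quadratic form $\bby^*\bbQ_i\bby - \operatorname{tr}\bbQ_i$, for which I would invoke the standard quadratic-form CLT (Bai–Silverstein type lemma, e.g.\ Lemma B.26 in \cite{2009baisil}): conditionally on $\bbZ$ it converges to a Gaussian with variance $2\operatorname{tr}\bbQ_i^2 + (Ex_{11}^4-3)\sum_{t}(\bbQ_i)_{tt}^2$, and summing the $m_k$ independent copies and dividing by $m_k$ produces the $2/m_k$ and $\kappa_{x,k}/m_k^2$ weights in $\theta_k$.

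Next I would identify the deterministic limits of $n^{-3}\operatorname{tr}\bbQ_i^2$ and $n^{-3}\sum_t (\bbQ_i)_{tt}^2$. Here I would use that $l_i/\psi_{nk}\to1$ almost surely (Proposition 2.1 of \cite{2021jiangbai} and Theorem 1 of \cite{PG4MT}), so $l_i$ can be replaced by $\psi_k$ in the resolvent up to negligible error, and then express the traces through the Stieltjes transform $\underline m$ of $\underline F$, the LSD of $\frac1n\bbX^*\bbU_2\bbD_2^2\bbU_2^*\bbX$. Concretely, $\frac1n\operatorname{tr}\bbQ_i$ and its analogues are polynomials in $\underline m_j(\psi_k)$, $j=1,2,3,4$: differentiating $\underline m(z) = \int (x-z)^{-1}\,d\underline F(x)$ repeatedly gives $\underline m_j$, and a short computation matching the powers of the resolvent in $\bbQ_i$ (which carries the squared resolvent of $\frac1n\bbD_2\bbZ\bbZ^*\bbD_2$ sandwiched by $\bbZ$) yields $\sigma_k = \underline m_2(\psi_k) + 2\psi_k\underline m_3(\psi_k) + \psi_k^2\underline m_4(\psi_k)$ for the $\operatorname{tr}\bbQ_i^2$-type term and $\rho_k = \underline m_1(\psi_k) + \psi_k\underline m_2(\psi_k)$ for the diagonal-sum term. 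That the LSD of $\frac1n\bbD_2\bbZ\bbZ^*\bbD_2$ and of $\frac1n\bbX^*\bbU_2\bbD_2^2\bbU_2^*\bbX$ coincide in the relevant sense follows because only $M$ (fixed) spikes are removed. Combining, $n^{-3}\operatorname{tr}\bbQ_i^2 \to \sigma_k$ and $n^{-3}\sum_t(\bbQ_i)_{tt}^2 \to \rho_k^2$ (the diagonal entries concentrating around $\frac1n\operatorname{tr}\bbQ_i$ by a delocalization/variance estimate on $\bbZ$), which assembles the conditional variance into $d_k^4\bigl(\tfrac{2}{m_k}\sigma_k + \tfrac{\kappa_{x,k}}{m_k^2}\rho_k^2\bigr) = \theta_k$.

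Finally I would remove the conditioning: since the conditional variance converges to the constant $\theta_k$ in probability, a standard argument (characteristic functions, dominated convergence over the conditioning $\sigma$-field, together with the almost-sure convergence of $l_i/\psi_{nk}$ and of the relevant spectral functionals of $\bbZ$) upgrades the conditional CLT to an unconditional one. The main obstacle I anticipate is the last interchange in Step 2, namely controlling the diagonal term $\sum_t(\bbQ_i)_{tt}^2$ and showing the diagonal entries of $\bbQ_i$ concentrate — this requires quantitative control of the resolvent of $\frac1n\bbD_2\bbZ\bbZ^*\bbD_2$ near the (possibly far-out, since $\psi_k$ may diverge) point $\psi_k$, i.e.\ a local-law / anisotropic-resolvent estimate that stays valid when $d_k^2\to\infty$; this is precisely where the separation condition in Assumption \ref{distant} and the rate $\psi'(d_k^2)>0$ are needed, and where the PG4MT of Lemma \ref{PG4MT} replaces the usual fourth-moment matching to handle the heavy-tailed case permitted by Assumption \ref{tail}.
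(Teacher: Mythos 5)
Your overall skeleton matches the paper's proof: condition on $\bbZ$, view (\ref{trclt}) as a centered quadratic form in $\bbY_k$ with coefficient matrix $\bbA=\frac1n\bbZ^*\bbD_2\left( \frac1n\bbD_2\bbZ\bbZ^*\bbD_2-l_i\bbI_{p-M}\right)^{-2}\bbD_2\bbZ$, apply a quadratic-form CLT (the paper cites Theorem 7.1 of \cite{2008baiyao}; you cite the Bai--Silverstein lemma, same toolbox), and identify the limiting variance through $a_{jj}\to\underline{m}_1(\psi_k)+\psi_k\underline{m}_2(\psi_k)$ and $\frac1n{\rm tr}\,\bbA^2\to\sigma_k$, the former obtained exactly as in the paper via the identity $\bbC(\bbC^*\bbC-\lambda\bbI)^{-2}\bbC^*=(\bbC\bbC^*-\lambda\bbI)^{-1}+\lambda(\bbC\bbC^*-\lambda\bbI)^{-2}$ and the isotropic estimate (4.7) of \cite{2020baoding}.

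There is, however, a genuine gap in your Step 1. Only $\bbZ=\bbU_2^*\bbX$ may be taken Gaussian; $\bbY_k=\bbU_{1k}^*\bbX$ keeps the original entries, its rows are uncorrelated but \emph{not} independent (they are different linear combinations of the same variables $x_{jt}$), and the coordinates of the $r$th row have fourth moment $3+\bigl(\sum_t u_{rt}^4\bigr)\bigl(Ex_{11}^4I(|x_{11}|<\sqrt n)-3\bigr)$, not $3$. Declaring the rows ``asymptotically independent with i.i.d.\ $N(0,1)$ coordinates up to a vanishing correction'' removes precisely the contribution that is \emph{not} vanishing: with Gaussian coordinates the fourth-cumulant term disappears altogether, and even with the correct per-row fourth moment, summing the variances of ``independent copies'' only yields the diagonal part $\sum_{i\in\mathcal{J}_k}\sum_t u_{it}^4$, whereas the true limit also contains the cross term $\sum_{i_1\neq i_2}\sum_t u_{i_1t}^2u_{i_2t}^2$ of $\kappa_{x,k}$, which arises exactly from the dependence between the rows of $\bbY_k$. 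The paper avoids this by computing the conditional variance of $\frac{1}{\sqrt n\,m_k}{\rm tr}(\bbY_k\bbA\bbY_k^*)$ directly in terms of the $x_{jt}$ and $u_{it}$ (its display (\ref{var})), which produces $\frac{2}{m_k}\sigma_k+\frac{\kappa_{x,k}}{m_k^2}\rho_k^2$; to repair your argument you must treat ${\rm tr}(\bbY_k\bbA\bbY_k^*)$ as a single quadratic form in the underlying $\bbX$-entries (or otherwise account for the row dependence), not row by row. A minor further point: PG4MT plays no role inside this lemma --- the Gaussianity of $\bbZ$ is a hypothesis here, and PG4MT is invoked only in the proof of Theorem \ref{th2} to justify that replacement before the lemma is applied.
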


To establish Theorem \ref{th2}, we require Lemma \ref{PG4MT}, the proof of which is presented in \cite{PG4MT}.
\begin{lemma}[PG4MT, Theorem 1 in \cite{PG4MT}]\label{PG4MT}
	Assuming that $\bbX$ and $\bbY$ are two sets of double arrays satisfying Assumptions \ref{tail}-\ref{distant}, then it holds that $\Omega_M(\psi_{n,k},\bbX,\bbX)$ and  $\Omega_M(\psi_{n,k},\bbX,\bbY)$ have the same limiting distribution, provided one of them has.
\end{lemma}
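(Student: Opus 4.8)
The plan is to prove Lemma~\ref{PG4MT} by a Lindeberg-type interpolation (Green-function comparison) over the entries of the array occupying the second slot, exploiting the fact that only a bounded-rank ``signal'' part of the data contributes at the scale of the limiting fluctuations. Recall from \cite{PG4MT} that, for $z$ in a small fixed neighbourhood of $\psi_{n,k}$, $\Omega_M(z,\bbA,\bbB)$ is an $M\times M$ random matrix that depends on $\bbA$ only through the $M$ signal directions $\bbU_1^{*}\bbA$ and on $\bbB$ only through the bulk directions $\bbU_2^{*}\bbB$; schematically it is $\tfrac1n(\bbU_1^{*}\bbA)\,\Phi(z;\bbU_2^{*}\bbB)\,(\bbU_1^{*}\bbA)^{*}$ minus a deterministic matrix, together with its low-order $z$-derivatives, where $\Phi(z;\cdot)$ collects resolvent functionals of the bulk such as $\tfrac1n\bbD_2(\tfrac1n\bbD_2\bbZ\bbZ^{*}\bbD_2-z)^{-1}\bbD_2$, and the spiked sample eigenvalues (and hence the CLT for them and for $\|\pmb\xi_{i1k}\|^{2}$) are read off from the joint entrywise fluctuations of $\Omega_M$ near $z=\psi_{n,k}$. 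Since ``same limiting distribution'' means equality of all limiting characteristic functions, it suffices to show that for every fixed bounded smooth $g$ and every fixed $z$ near $\psi_{n,k}$,
\begin{equation*}
\rE\, g\!\left(\Omega_M(z,\bbX,\bbX)\right)-\rE\, g\!\left(\Omega_M(z,\bbX,\bbY)\right)\longrightarrow 0 .
\end{equation*}

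By the truncation performed in Section~\ref{proof}, I may assume all entries of $\bbX$ and of $\bbY$ are bounded by $\eta_n\sqrt n$ with $\eta_n\to0$, have mean $0$ and variance $1$; by hypothesis the first two moments of $\bbX$ and $\bbY$ also agree, which is all that is used. I would then enumerate the $np$ scalar entries of the array in the second slot and replace them one at a time, interpolating from $\bbX$ to $\bbY$; each such replacement is a rank-one update of $\bbZ=\bbU_2^{*}(\cdot)$ and hence of $\Phi(z;\cdot)$. At each step, Taylor-expand $g(\Omega_M(z,\bbX,\cdot))$ to second order in the single entry being swapped: the zeroth, first and second order terms involve that entry only through its mean and variance and therefore cancel exactly between the two arrays, leaving a third-order Lagrange remainder bounded by the third absolute moment of a truncated entry (which is $O(\eta_n\sqrt n)$) times a uniform bound on a third mixed derivative of $g\circ\Omega_M$ in that entry. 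The whole difficulty is to show that these $np$ remainders sum to $o(1)$; this is where matching only two moments, rather than four, must be compensated for, and it is where I expect the real work to lie.

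The decisive inputs for that error estimate are threefold. First, Assumption~\ref{distant} --- the separation condition~(\ref{sc}) together with $\psi'(d_k^2)>0$ --- keeps $z$ a fixed positive distance from the support of the bulk limiting law, so the resolvent $(\tfrac1n\bbD_2\bbZ\bbZ^{*}\bbD_2-z)^{-1}$ and every functional $\Phi(z;\cdot)$ are $O(1)$ in operator norm with overwhelming probability, uniformly along the interpolation and on a small contour around $\psi_{n,k}$. Second, since a single entry enters $\bbZ\bbZ^{*}$ only through a rank-one correction, differentiating $\Phi(z;\cdot)$ once in that entry gains a factor $O(n^{-1/2})$, and --- crucially --- the bulk functionals self-average: they concentrate around deterministic equivalents that are universal in the first two moments, with fluctuations of size $O_P(n^{-1})$, so that the per-swap third-order remainder is in fact $o(n^{-2})$ (a bound using only the truncation level would be too crude here), giving $o(1)$ after summation. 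Third, the only interaction between the bulk and the fixed signal rows $\bbU_1^{*}\bbX$ that survives at the CLT scale is through bilinear cross terms such as $\tfrac1n(\bbU_1^{*}\bbX)\bbZ^{*}\bbD_2 R\,\bbD_2\bbZ(\bbU_1^{*}\bbX)^{*}$ with $R$ a bulk resolvent at $z$; conditioning on $\bbU_1^{*}\bbX$ (which is not changed) turns these into linear statistics of the bulk, to which the same comparison applies and whose limiting law is governed by the first two bulk moments. Assembling these ingredients along the interpolation path yields $\rE\,g(\Omega_M(z,\bbX,\bbX))-\rE\,g(\Omega_M(z,\bbX,\bbY))\to 0$, and hence the lemma; the main obstacle, as noted, is the uniform control of the third mixed derivatives and the resulting accumulated-error bound, everything else being bookkeeping on top of the resolvent estimates furnished by the separation condition.
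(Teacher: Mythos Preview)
The paper does not prove Lemma~\ref{PG4MT} at all: it is quoted verbatim as Theorem~1 of Jiang and Bai~\cite{PG4MT}, and the text explicitly says ``the proof of which is presented in \cite{PG4MT}.'' So there is no in-paper argument to compare your proposal against; the lemma is imported wholesale.

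That said, your sketch is in the right family. The PG4MT is indeed proved in \cite{PG4MT} by a Lindeberg-type entry-by-entry replacement in the bulk coordinates $\bbU_2^{*}\bbX$, using resolvent expansions and the separation of $\psi_{n,k}$ from the bulk spectrum to control the resolvent uniformly along the interpolation. Your decomposition of $\Omega_M(z,\bbA,\bbB)$ into a fixed signal part $\bbU_1^{*}\bbA$ and a bulk resolvent functional of $\bbU_2^{*}\bbB$ matches the structure exploited there, and your identification of the crux---that only two moments match, so the third-order remainder must be shown small by structure rather than by cancellation---is exactly the point of the ``partial'' in PG4MT.

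One caution on your heuristic for that remainder. The assertion that ``the per-swap third-order remainder is in fact $o(n^{-2})$'' because bulk functionals self-average is the right intuition but is not, by itself, an argument: the relevant quantity after a single swap is a specific mixed third derivative of $g\circ\Omega_M$ in one bulk entry, and bounding it requires tracking how that entry propagates through $(\lambda\bbI_n-\tfrac1n\bbX^{*}\Gamma\bbX)^{-1}$ via the resolvent identity, together with isotropic/entrywise bounds on $G$ at $z$ outside the support (this is where Assumption~\ref{distant} enters). The actual proof in \cite{PG4MT} carries this out and also handles the cross term $\bbU_1^{*}\bbX\,G\,\bbX^{*}\bbU_1$ carefully; your conditioning-on-$\bbU_1^{*}\bbX$ idea is close to what is done. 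If you intend to reconstruct the proof, that derivative bookkeeping is where essentially all the effort goes---your outline is otherwise accurate.
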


\begin{remark}
	Lemma \ref{PG4MT} illustrates the basic idea and procedures of the PG4MT to show the universality of a limiting result related to the large dimensional random matrices. By the PG4MT, we can show the universality of the asymptotic law for spiked eigenvalues of a generalized spiked covariance matrix.
\end{remark}

Now, we start to prove Lemma \ref{cor}.
\begin{proof}[Proof of Lemma \ref{cor}]

	Consider $\dfrac{1}{\sqrt{n} \cdot m_k}{\rm tr}(\bbY_k\bbA\bbY_k^* )$, note that the mean of the $\dfrac{1}{\sqrt{n} \cdot m_k}{\rm tr}\left(\bbY_k\bbA\bbY_k^* \right)$ is
	\begin{equation}\label{E}
		E\left( \dfrac{1}{\sqrt{n} \cdot m_k}{\rm tr}\left(\bbY_k\bbA\bbY_k^* \right)\right) =\dfrac{1}{\sqrt{n} \cdot m_k}{\rm tr}\left( \bbA\cdot E\bbY_k^*\bbY_k\right) =\dfrac{1}{\sqrt{n}}{\rm tr}\left( \bbA\right),
	\end{equation}
	where $\bbA=\dfrac{1}{n}\bbZ^*\bbD_2\left( \dfrac{1}{n}\bbD_2\bbZ\bbZ^*\bbD_2-l_i\bbI_{p-M}\right) ^{-2}\bbD_2\bbZ$. 
	
	Next, we show that $a_{jj}$ are asymptotically identical with $\underline{m}_1(\psi_{k})+\psi_{k}\underline{m}_{2}(\psi_{k})$.
	In fact, we have
	\begin{equation}\label{ajj}
		\begin{aligned}
		a_{jj}&=\left[  \dfrac{1}{n}\bbZ^*\bbD_2\left( \dfrac{1}{n}\bbD_2\bbZ\bbZ^*\bbD_2-l_i\bbI_{p-M}\right) ^{-2}\bbD_2\bbZ\right]  _{jj}\\
		&=\left[  \left( \dfrac{1}{n}\bbZ^*\bbD_2^2\bbZ-l_i\bbI_{n}\right) ^{-1}\right]  _{jj}+l_i\left[  \left( \dfrac{1}{n}\bbZ^*\bbD_2^2\bbZ-l_i\bbI_{n}\right) ^{-2}\right]  _{jj}\\
		&=\left[  \left( \dfrac{1}{n}\bbZ^*\bbD_2^2\bbZ-l_i\bbI_{n}\right) ^{-1}\right]  _{jj}+l_i\left[\left(   \left( \dfrac{1}{n}\bbZ^*\bbD_2^2\bbZ-l_i\bbI_{n}\right) ^{-1}\right) '\right]  _{jj}\\
		&\xrightarrow{P}\underline{m}_1(\psi_{k})+\psi_{k}\underline{m}_{2}(\psi_{k}),
		\end{aligned}
	\end{equation}
where the first step uses the identity $\bbC\left( \bbC^*\bbC-\lambda\bbI\right) ^{-2}\bbC^*=\left(\bbC\bbC^*-\lambda\bbI \right) ^{-1}+\lambda\left( \bbC\bbC^*-\lambda\bbI\right)^{-2} $ and the third step follows from (4.7) in \cite{2020baoding}.
	
	Now, conditional on $\bbZ$, then we have
	\begin{equation}\label{var}
		\begin{aligned}
			&E\left( \dfrac{1}{\sqrt{n} \cdot m_k}{\rm tr}(\bbY_k\bbA\bbY_k^* )-\dfrac{1}{\sqrt{n}}{\rm tr}\left( \bbA\right)\right)^2\\
			=&\dfrac{1}{nm_k^2}\left( \sum_{i\in\mathcal{J}_k}\sum_{j=1}^n\sum_{k=1}^n y_{ij}a_{jk}y_{ik}-\sum_{i\in\mathcal{J}_k}\sum_{q=1}^n a_{qq}\right) ^2\\
			=&\dfrac{1}{nm_k^2}\left( \sum_{i\in\mathcal{J}_k}\sum_{j=1}^n\left( \sum_{t,l}^p u_{it}u_{il}x_{jt}x_{jl}-1\right) a_{jj}\right) ^2+\dfrac{1}{nm_k^2}\left( \sum_{i\in\mathcal{J}_k}\sum_{j\neq k}^n\sum_{t,l}^p u_{it}u_{il}x_{jt}x_{kl}a_{jk}\right) ^2\\
			=&\dfrac{1}{nm_k^2}\sum_{i_1,i_2}\sum_{j_1,j_2} a_{j_1j_1}a_{j_2j_2}\left( \sum_{t_1,l_1}u_{i_1t_1}u_{i_1l_1}x_{j_1t_1}x_{j_1l_1}-1\right) \left( \sum_{t_2,l_2}u_{i_2t_2}u_{i_2l_2}x_{j_2t_2}x_{j_2l_2}-1\right)\\
			&~~+\dfrac{1}{nm_k^2}\sum_{i_1,i_2}\sum_{j_1\neq k_1}\sum_{j_2\neq k_2}\sum_{t_1,l_1}\sum_{t_2,l_2} a_{j_1k_1}a_{j_2k_2}u_{i_1t_1}u_{i_2t_2}u_{i_1l_1}u_{i_2l_2}x_{j_1t_1}x_{j_2t_2}x_{k_1l_1}x_{k_2l_2}\\
			=&\dfrac{1}{m_k^2}\left( \sum_{i\in\mathcal{J}_k}\sum_{t=1}^p u_{it}^4+\sum_{\stackrel{i_1\neq i_2}{i_1,i_2\in\mathcal{J}_k}} \sum_{t=1}^p u_{i_1t}^2u_{i_2t}^2\right) \left( Ex_{11}^4\mathit{I}(|x_{11}|<\sqrt{n})-3\right)\cdot\dfrac{1}{n} \sum_{j=1}^n a_{jj}^2+\dfrac{2}{nm_k}\text{tr}\bbA^2,
		\end{aligned}
	\end{equation}
	with
	
		\begin{align}\label{trA2}
			&\ \ \ \ \dfrac{1}{n}{\rm tr}\bbA^2\nonumber\\&=\dfrac{1}{n}{\rm tr}\left( \dfrac{1}{n}\bbZ^*\bbD_2\left( \dfrac{1}{n}\bbD_2\bbZ\bbZ^*\bbD_2-l_i\bbI_{p-M}\right) ^{-2}\bbD_2\bbZ\right) ^2\nonumber\\
			&=\dfrac{1}{n}{\rm tr}\left( \bbZ^*\bbD_2\left( \dfrac{1}{n}\bbD_2\bbZ\bbZ^*\bbD_2-l_i\bbI_{p-M}\right) ^{-2}\right.\nonumber\\
			&~~~~~~~~~~~~\left.\cdot\left( \dfrac{1}{n}\bbD_2\bbZ\bbZ^*\bbD_2-l_i\bbI_{p-M}+l_i\bbI_{p-M}\right)\left( \dfrac{1}{n}\bbD_2\bbZ\bbZ^*\bbD_2-l_i\bbI_{p-M}\right) ^{-2}\dfrac{1}{n}\bbZ^*\bbD_2\right) \\
			&=\dfrac{1}{n}{\rm tr}\left( \left( \dfrac{1}{n}\bbD_2\bbZ\bbZ^*\bbD_2-l_i\bbI_{p-M}\right) ^{-3}\dfrac{1}{n}\bbD_2\bbZ\bbZ^*\bbD_2\right)
			+\dfrac{1}{n}l_i\cdot{\rm tr}\left( \left( \dfrac{1}{n}\bbD_2\bbZ\bbZ^*\bbD_2-l_i\bbI_{p-M}\right) ^{-4}\dfrac{1}{n}\bbD_2\bbZ\bbZ^*\bbD_2\right)\nonumber\\
			&=\dfrac{1}{n}{\rm tr}\left( \dfrac{1}{n}\bbD_2\bbZ\bbZ^*\bbD_2-l_i\bbI_{p-M}\right) ^{-2}+\dfrac{2l_i}{n}{\rm tr}\left( \dfrac{1}{n}\bbD_2\bbZ\bbZ^*\bbD_2-l_i\bbI_{p-M}\right) ^{-3}
			+\dfrac{l_i^2}{n}{\rm tr}\left( \dfrac{1}{n}\bbD_2\bbZ\bbZ^*\bbD_2-l_i\bbI_{p-M}\right) ^{-4}\nonumber\\
			&\xrightarrow{P}\underline{m}_2(\psi_k)+2\psi_k\underline{m}_3(\psi_k)+\psi_k^2\underline{m}_4(\psi_k).\nonumber
		\end{align}

Define \begin{equation}\label{sr}
	\sigma_k=\underline{m}_2(\psi_k)+2\psi_k\underline{m}_3(\psi_k)+\psi_k^2\underline{m}_4(\psi_k)\ \ \ \ \text{and} \ \ \ \ \rho_k=\underline{m}_1(\psi_{k})+\psi_{k}\underline{m}_{2}(\psi_{k}).
\end{equation}
Then, by (\ref{ajj}), (\ref{var}) and (\ref{trA2}), we obtain
\begin{equation}\label{var'}
	Var\left( \dfrac{1}{\sqrt{n} \cdot m_k}{\rm tr}\left(\bbY_k\bbA\bbY_k^* \right)\right)\xrightarrow{P}\dfrac{2}{m_k}\sigma_k+\dfrac{\kappa_{x,k}}{m_k^2}\rho_k^2,
\end{equation}
where $\kappa_{x,k}=\lim\left( \sum_{i\in\mathcal{J}_k}\sum_{t=1}^p u_{it}^4+\sum_{\stackrel{i_1\neq i_2}{i_1,i_2\in\mathcal{J}_k}} \sum_{t=1}^p u_{i_1t}^2u_{i_2t}^2\right) \left( Ex_{11}^4\mathit{I}(|x_{11}|<\sqrt{n})-3\right)$.
By applying the central limit theorem of Theorem 7.1 in \cite{2008baiyao}, we can obtain $\dfrac{1}{\sqrt{n} \cdot m_k}{\rm tr}\left(\bbY_k\bbA\bbY_k^* \right)$ is a linear combination  of $\left( \dfrac{1}{\sqrt{n}}\bbY_k\bbA\bbY_k^*\right) _{ii}$, whose joint distribution tends to normal distribution. Therefore, combining (\ref{E}) and (\ref{var'}), we complete the proof.
\end{proof}
\begin{proof}[Proof of Theorem \ref{th2}]
	Let $l_i$ be an eigenvalue of $\bbB$ with corresponding eigenvector $\bbf_i$. Then, we have
	\begin{equation}
		\left( \bbB-l_i\bbI_p\right) \bbf_i=0.
	\end{equation}
	For the generalized spiked covariance matix $\pmb{\Sigma}=\bbT\bbT^*$, consider the corresponding sample covariance matrix $\bbB=\bbT\bbS_n\bbT^*$, where $\bbS_n=\dfrac{1}{n}\bbX\bbX^*$ is the standard sample covariance with sample size $n$. Substituting the singular decomposition of $\bbT$ into $\bbB_n$,we have
	$$\begin{aligned}
		&\left( \bbB-l_i\bbI_p\right) \bbf_i\\=&\left( \bbT\bbS_n\bbT^*-l_i\bbI_p\right)\bbf_i\\
		=&\bbV\left(\bbD\bbU^*\bbS_n\bbU\bbD-l_i\bbI_p\right) \bbV^*\bbf_i\\
		=&\left( \bbV_1,\ \bbV_2\right) \left(\begin{pmatrix} \bbD_1 & 0 \\ 0 & \bbD_2 \end{pmatrix}\begin{pmatrix} \bbU_1^*  \\ \bbU_2^*  \end{pmatrix} \bbS_n \left( \bbU_1,\ \bbU_2\right)\begin{pmatrix} \bbD_1 & 0 \\ 0 & \bbD_2 \end{pmatrix}-l_i\bbI_p\right) \begin{pmatrix} \bbV_1^*\bbf_i  \\ \bbV_2^*\bbf_i  \end{pmatrix} =0,
	\end{aligned}
	$$
	then we obtian
	\begin{equation}
		\begin{pmatrix} \bbD_1\bbU_1^*\bbS_n\bbU_1\bbD_1-l_i\bbI_M & \bbD_1\bbU_1^*\bbS_n\bbU_2\bbD_2 \\ 
			\bbD_2\bbU_2^*\bbS_n\bbU_1\bbD_1 & \bbD_2\bbU_2^*\bbS_n\bbU_2\bbD_2-l_i\bbI_{p-M} \end{pmatrix}\begin{pmatrix} \pmb\xi_{i1}  \\ \pmb\xi_{i2}  \end{pmatrix}=0,
	\end{equation}
	where $\pmb\xi_{i1}=\bbV_1^*\bbf_i$ and $\pmb\xi_{i2}=\bbV_2^*\bbf_i$.
	
	If we only consider the sample spiked eigenvalues of $\bbB$, $l_i,i \in \mathcal{J}_k$, $k=1,\cdots,K$. As when $n \to \infty$, with probability 1, the limit $\psi(d_k^2)$ of $l_i$ does not belong to interval $ \left[ a_y,b_y\right] $ and the eigenvalues of $\bbD_2\bbU_2^*\bbS_n\bbU_2\bbD_2$ go inside the interval $\left[ a_y,b_y\right]$. 
	Thus, $\bbD_2\bbU_2^*\bbS_n\bbU_2\bbD_2-l_i\bbI_{p-M}$ is invertible with probability 1, we obtain
	\begin{equation}\label{f1}
		\begin{aligned}
			\left( \bbD_1\bbU_1^*\bbS_n\bbU_1\bbD_1-l_i\bbI_M
			-\bbD_1\bbU_1^*\bbS_n\bbU_2\bbD_2\left( \bbD_2\bbU_2^*\bbS_n\bbU_2\bbD_2-l_i\bbI_{p-M}\right) ^{-1}\bbD_2\bbU_2^*\bbS_n\bbU_1\bbD_1\right) \pmb\xi_{i1}=0,
		\end{aligned}
	\end{equation}
	\begin{equation}\label{f1'}
		\left( \bbD_2\bbU_2^*\bbS_n\bbU_2\bbD_2-l_i\bbI_{p-M}\right) \bbu_{i2}+\bbD_2\bbU_2^*\bbS_n\bbU_1\bbD_1\pmb\xi_{i1}=0.
	\end{equation}
	Applying the in-out-exchange formula $\bbX^*\left( \bbX\bbX^*-\lambda\bbI\right) ^{-1}\bbX=\bbI+\lambda\left( \bbX^*\bbX-\lambda\bbI\right) ^{-1}$ to $\dfrac{1}{\sqrt{n}}\bbX^*\bbU_2\bbD_2\cdot\\
	( \bbD_2\bbU_2^*\bbS_n\bbU_2\bbD_2-l_i\bbI_{p-M}) ^{-1}\dfrac{1}{\sqrt{n}}\bbD_2\bbU_2^*\bbX$ in the first equation (\ref{f1}), yields
	\begin{equation}\label{f2}
		\left( \bbI_M+\dfrac{1}{n}\bbD_1\bbU_1^*\bbX\left( n^{-1}\bbX^*\Gamma\bbX-l_i\bbI_n\right) ^{-1}\bbX^*\bbU_1\bbD_1\right) \pmb\xi_{i1}=0,
	\end{equation}
	where $\Gamma=\bbU_2\bbD_2^2\bbU_2^*$.
	
	Define
	\begin{small}
		\begin{equation}\label{Omega}
			\Omega_M\left( \lambda\right) =\dfrac{\lambda}{\sqrt{n}}\left[ \text{tr}\left\lbrace \left( \lambda\bbI_n-\dfrac{1}{n}\bbX^*\Gamma\bbX\right) ^{-1}\right\rbrace \bbI_M-\bbU_1^*\bbX\left( \lambda\bbI_n-\dfrac{1}{n}\bbX^*\Gamma\bbX\right) ^{-1}\bbX^*\bbU_1\right] .
		\end{equation}
	\end{small}
	Set $\Omega_{\psi_k}$ as an $M \times M$ Hermitian matrix, which follows the limiting distribution of $\Omega_M(\psi_n(d_k^2))$ is detailed in Theorem 2 of Jiang and Bai \cite{PG4MT}. 
	By the definition of (\ref{Omega}), it follows from (\ref{f2}) that,
	\begin{equation}\label{f3}
		\left( \bbI_M-\dfrac{1}{n}\text{tr}\left\lbrace \left(l_i \bbI_n-\dfrac{1}{n}\bbX^*\Gamma\bbX\right) ^{-1}\right\rbrace \bbD_1^{2}+\dfrac{1}{l_i\sqrt{n}}\bbD_1 \Omega_M(l_i)\bbD_1 \right) \pmb\xi_{i1}=0.
	\end{equation}
	Because $\left\lbrace l_i/\psi_n(d_k^2)- 1\right\rbrace $ in probability converges to 0 for all $i \in \mathcal{J}_k$, where
	$$\psi_n(d_k^2)=d_k^2\left( 1+c_n\int \frac{t\text{d}H_n(t)}{d_k^2-t}\right) $$
	and $c_n=p/n$ and $H_n$ is the ESD of $\pmb{\Sigma}$, (\ref{f3}) can be written as
	\begin{equation}\label{f4}
		\begin{aligned}
			&\left( \bbI_M-\dfrac{1}{n}\text{tr}\left\lbrace \left( \psi_n(d_k^2)\bbI_n-\dfrac{1}{n}\bbX^*\Gamma\bbX\right) ^{-1}\right\rbrace \bbD_1^{2}\right.\\
			&\ \ \ \ \  \left.+\bbB_1(l_i)+\bbB_2(l_i)+\dfrac{1}{\psi_n(d_k^2)\sqrt{n}}\bbD_1 \Omega_M(\psi_n(d_k^2))\bbD_1 \right) \pmb\xi_{i1}=0,
		\end{aligned}
	\end{equation}
	where
	$$\bbB_1(l_i)=\dfrac{1}{n}\left[ \text{tr}\left( \psi_n(d_k^2)\bbI_n-\dfrac{1}{n}\bbX^*\Gamma\bbX\right)^{-1}-\text{tr}\left( l_i\bbI_n-\dfrac{1}{n}\bbX^*\Gamma\bbX\right)^{-1}\right] \bbD_1^{2},$$ and
	$$\bbB_2(l_i)=\dfrac{1}{l_i\sqrt{n}}\bbD_1 \Omega_M(l_i)\bbD_1-\dfrac{1}{\psi_n(d_k^2)\sqrt{n}}\bbD_1\Omega_M(\psi_n(d_k^2))\bbD_1.$$
	Let $\gamma_{ki}=\sqrt{n}\left( \dfrac{l_i}{\psi_n(d_k^2)}-1\right) $, and by the formula $\bbA^{-1}-\bbB^{-1}=\bbA^{-1}\left( \bbA-\bbB\right) \bbB^{-1}$ with $\bbA,\bbB$ being two arbitrary $n \times n$ invertible matrices, we obtain,

		\begin{align}\label{B1}
			\bbB_1(l_i)&=\left[ \dfrac{1}{n}\text{tr}\left( \psi_n(d_k^2)\bbI_n-\dfrac{1}{n}\bbX^*\Gamma\bbX\right)^{-1}-\dfrac{1}{n}\text{tr}\left( l_i\bbI_n-\dfrac{1}{n}\bbX^*\Gamma\bbX\right)^{-1}\right] \bbD_1^{2}\nonumber\\
			&=\dfrac{1}{n}\text{tr}\left[ \left( \psi_n(d_k^2)\bbI_n-\dfrac{1}{n}\bbX^*\Gamma\bbX\right)^{-1}\left( l_i\bbI_n-\psi_n(d_k^2)\bbI_n\right)
			\left( \psi_n(d_k^2)(1+n^{-1/2}\gamma_{ki})\bbI_n-\dfrac{1}{n}\bbX^*\Gamma\bbX\right)^{-1}\right] \bbD_1^{2}\nonumber\\
			&=\dfrac{\psi_n(d_k^2)\gamma_{ki}}{n^{3/2}}\text{tr}\left[ \left( \psi_n(d_k^2)(1+n^{-1/2}\gamma_{ki})\bbI_n-\dfrac{1}{n}\bbX^*\Gamma\bbX\right)^{-1}\left( \psi_n(d_k^2)\bbI_n-\dfrac{1}{n}\bbX^*\Gamma\bbX\right)^{-1}\right] \bbD_1^{2}\nonumber\\
			&=\left[n^{-3/2}\psi_n(d_k^2)\gamma_{ki}\text{tr}\left( \psi_n(d_k^2)\bbI_n-\dfrac{1}{n}\bbX^*\Gamma\bbX\right)^{-2}+O_p\left( \dfrac{1}{n\psi_n(d_k^2)}\right) \right]\bbD_1^{2}\\
			&= \left[\dfrac{\gamma_{ki}}{\sqrt{n}}\dfrac{\psi_n(d_k^2)}{n} \text{tr}\left( \psi_n(d_k^2)\bbI_n-\dfrac{1}{n}\bbX^*\Gamma\bbX\right)^{-2}\right]\bbD_1^2+O_p\left( \dfrac{1}{n\psi_n(d_k^2)}\right) \bbD_1^{2}\nonumber\\
			&=\dfrac{\gamma_{ki}}{\sqrt{n}}\psi_n(d_k^2)\underline{m}_2(\psi_k)\bbD_1^2+O_p\left( \dfrac{1}{n\psi_n(d_k^2)}\right) \bbD_1^{2}.\nonumber
		\end{align}
	
	Similarly,

		\begin{align}\label{B2}
			\bbB_2(l_i)=&\dfrac{1}{n} \left( \text{tr}\left\lbrace \left( l_i\bbI_n-\dfrac{1}{n}\bbX^*\Gamma\bbX\right) ^{-1}-\left( \psi_n(d_k^2)\bbI_n-\dfrac{1}{n}\bbX^*\Gamma\bbX\right) ^{-1}\right\rbrace \bbD_1^{2}\right)\nonumber\\ 
			&-\dfrac{1}{n}\left( \bbD_1\bbU_1^*\bbX\left( l_i\bbI_n-\dfrac{1}{n}\bbX^*\Gamma\bbX\right) ^{-1}\bbX^*\bbU_1\bbD_1 -\bbD_1\bbU_1^*\bbX\left( \psi_n(d_k^2)\bbI_n-\dfrac{1}{n}\bbX^*\Gamma\bbX\right) ^{-1}\bbX^*\bbU_1\bbD_1\right) \\
			=&O_p\left( \dfrac{1}{n\psi_n(d_k^2)}\right) \bbD_1^{2}.\nonumber
		\end{align}
	
	The calculations are based on the fact that
	$$\dfrac{1}{n}\text{tr}\left( \psi_n(d_k^2)\bbI_n-\dfrac{1}{n}\bbX^*\Gamma\bbX\right)^{-2} \stackrel{a.s.}{\longrightarrow} \underline{m}_2(\psi_k),$$
	and the random vector $\left( \gamma_{ki}, i \in \mathcal{J}_k \right)\textquotesingle$ converges weakly to the joint distribution of the $m_k$ eigenvalues of Gaussian random matrix.
	
	Furthermore, if consider the $k$th diagonal block of the item
	$$\dfrac{1}{n}\text{tr}\left\lbrace \left( \psi_n(d_k^2)\bbI_n-\dfrac{1}{n}\bbX^*\Gamma\bbX\right) ^{-1}\right\rbrace \bbI_M$$
	in (\ref{f4}), we change the related $\underline{m}$ as $\underline{m}_n$, with $H$ substituted by the ESD $H_n$ and $c$ by $c_n$, which satisfies the equation
	$$\psi_n(d_k^2)=-\dfrac{1}{\underline{m}_n(\psi_n(d_k^2))}+c_n\int \frac{t}{1+t\underline{m}_n(\psi_n(d_k^2))}\text{d}H_n(t). $$
	By the formula (4.9) of Theorem 4.2 in Bao et al. \cite{2020baoding}, it can be obtained that
	\begin{equation}\label{th1.1}
		\dfrac{1}{n}\psi_n(d_k^2)\text{tr} \left( \psi_n(d_k^2)\bbI_n-\dfrac{1}{n}\bbX^*\Gamma\bbX\right) ^{-1}+\psi_n(d_k^2)\underline{m}_n(\psi_n(d_k^2))=O_p\left( \dfrac{1}{n}\right).
	\end{equation}
	Note that $\psi_n(d_k^2)$ is the inverse of the Stieltjes transform $\underline{m}_n(\psi_n(d_k^2))$ at $-1/d_k^2$, we have $\underline{m}_n(\psi_n(d_k^2))=-1/d_k^2$.
	
	Recall (\ref{f4}). For every sample spiked eigenvalue, $l_i$, $j\in\mathcal{J}_k$, $k=1,\cdots,K$, we can get
	\begin{equation}\label{f5}
		\begin{aligned}
			0&=\left( \bbI_M+\underline{m}_n(\psi_n(d_k^2))\bbD_1^2+\dfrac{\gamma_{ki}}{\sqrt{n}}(\psi_n(d_k^2)\underline{m}_2(\psi_n(d_k^2))) \bbD_1^2\right.\\
			&\ \ \ \ \ \ \ \  \left.+\dfrac{1}{\psi_n(d_k^2)\sqrt{n}}\bbD_1\Omega_M(\psi_n(d_k^2))\bbD_1+O_p\left( \dfrac{1}{n\psi_n(d_k^2)}\right)\bbD_1^2 \right) \pmb\xi_{i1}\\
			&=\left( \psi_n(d_k^2)\bbI_M+\psi_n(d_k^2)\underline{m}_n(\psi_n(d_k^2))\bbD_1^2
			+\dfrac{1}{\sqrt{n}}\bbD_1\Omega_M(\psi_n(d_k^2))\bbD_1\right.\\
			&\ \ \ \ \ \ \ \ \ \ \ \ \ \ \ \ \ \ \ \ \ \ \ \ \ \ \ \ \ \ \ \ \ \ \ \ \ \  \left.+\dfrac{\gamma_{ki}}{\sqrt{n}}(\psi_n^2(d_k^2)\underline{m}_2(\psi_n(d_k^2))) \bbD_1^2+o_p\left( \dfrac{1}{\sqrt{n}}\right)\bbD_1^2 \right) \pmb\xi_{i1}
		\end{aligned}
	\end{equation}
	by the equations (\ref{B1}), (\ref{B2}) and (\ref{th1.1}). 
	
	More specifically, by (\ref{f5}) and applying Skorokhod strong representation theorem (see Skorohod \cite{1956Skorohod}, Hu and Bai \cite{2014hubai}), by selecting an appropriate probability space, it is possible to suggest that the convergence of $\Omega_M(\psi_n(d_k^2))$ and (\ref{f5}) conforms to the `almost surely' criterion (see Jiang and Bai \cite{2021jiangbai}), it produces

			\begin{align}\label{f6}
				&\left(\begin{pmatrix} \psi_n(d_k^2)\left(1- \dfrac{d_1^2}{d_k^2}\right) \bbI_{m_1} & 0 & \cdots & 0\\
					0 & \ddots &  &  \\
					&  &  \psi_n(d_k^2)\left( 1+d_k^2\underline{m}_n(\psi_n(d_k^2))\right) \bbI_{m_k} & \vdots\\
					\vdots &  &  &  \\
					&  & \ddots & 0\\
					0 & \cdots & 0 &\psi_n(d_k^2)\left( 1-\dfrac{d_K^2}{d_k^2}\right) \bbI_{m_K}
				\end{pmatrix} \right.\\
				&\left.+\dfrac{\gamma_{ki}}{\sqrt{n}}\begin{pmatrix} d_1^2\psi_n^2(d_k^2)\underline{m}_2(\psi_n(d_k^2))\bbI_{m_1} &0  &\cdots  & 0 \\
					0	& \ddots &  &  \\
					&  & d_k^2\psi_n^2(d_k^2)\underline{m}_2(\psi_n(d_k^2))\bbI_{m_k} &\vdots \\
					&  & \ddots & 0 \\
					0	&  \cdots& 0 & d_K^2\psi_n^2(d_k^2)\underline{m}_2(\psi_n(d_k^2))\bbI_{m_K}
				\end{pmatrix}\right.\nonumber\\
				&\left.+\dfrac{1}{\sqrt{n}}\bbD_1\Omega_M(\psi_n(d_k^2))\bbD_1+o\left(\dfrac{1}{\sqrt{n}} \right)\bbD_1^2  \right)\pmb\xi_{i1}=0,\nonumber
	\end{align}
	with noting $\underline{m}_n(\psi_n(d_k^2))=-1/d_k^2$.
	
	Splitting $\pmb\xi_{i1}$ into $K$ subvectors with dimensions $m_k,k=1,\cdots,K$. For the population eigenvalues $d_u^2$ in the $u$th diagonal block of $\bbD_1^2$, if $u\neq k$, then $\underline{m}_n(\psi_n(d_k^2))=-\dfrac{1}{d_k^2}$ by the definition of $\psi_n(d_k^2)$, hence $\psi_n(d_k^2)\left(1- \dfrac{d_1^2}{d_k^2}\right)$ keeps away from 0, by the separation condition of spikes (\ref{sc}). Moreover, $\psi_n(d_k^2)( 1+d_k^2\underline{m}_n(\psi_n(d_k^2)))=0$ by definition. Then, from (\ref{f6}), one can show that
	\begin{equation}
		\pmb\xi_{i1u} \to 0, \ \ \text{for}\  u \neq k,
	\end{equation}
	and with multiplying $n^{1/4}$ to the $k$th block row and $k$th block column of (\ref{f6}),
	\begin{equation}\label{f7}
		d_k^2\left( \gamma_{ki}\psi_n^2(d_k^2)m_2(\psi_n(d_k^2))\bbI_{m_k}+\Omega_{kk}+o(1)\right) \pmb\xi_{i1k}=0
	\end{equation}
	where $\Omega_{kk}$ is the $k$-th diagonal block of $\Omega_{\psi_k}$. 
	
	From (\ref{f7}), one conclude that $\left( \gamma_{ki}d_k^2\psi_n^2(d_k^2)m_2(\psi_n(d_k^2))\bbI_{m_k},i \in \mathcal{J}_k \right) ^\prime $ tends to the $m_k$ eigenvalues of the $m_k \times m_k$ matrix $-d_k^2\Omega_{kk}$, where $-\Omega_{kk}$ is the limit of $\Omega_M(\psi_n(d_k^2))$ in Corollary 3.1 of Jiang and Bai \cite{2021jiangbai}, and $\pmb\xi_{i1k}$ tends to an correspongding eigenvector (dimension $m_k$).

	By (\ref{f1'}), we have
	\begin{equation}\label{u2}
		\begin{aligned}
			\left\| \pmb\xi_{i2}\right\| ^2=&\left\langle \pmb\xi_{i2},\pmb\xi_{i2}\right\rangle \\
			=&\pmb\xi_{i2}^*\pmb\xi_{i2}\\
			=&\pmb\xi_{i1}^*\bbD_1\bbU_1^*\bbS_n\bbU_2\bbD_2\left( \bbD_2\bbU_2^*\bbS_n\bbU_2\bbD_2-l_i\bbI_{p-M}\right) ^{-2}\bbD_2\bbU_2^*\bbS_n\bbU_1\bbD_1\pmb\xi_{i1}.
		\end{aligned}	
	\end{equation}

	Combining $\pmb\xi_{i1u} \to 0$ for all $u\neq k$, (\ref{u2}) becomes
	\begin{small}\begin{equation}\label{2}
			\begin{aligned}
				&\left\| \pmb\xi_{i2}\right\| ^2\\=&\pmb\xi_{i1k}^*d_k^2\bbU_{1k}^*\bbS_n\bbU_2\bbD_2\left( \bbD_2\bbU_2^*\bbS_n\bbU_2\bbD_2-l_i\bbI_{p-M}\right) ^{-2}\bbD_2\bbU_2^*\bbS_n\bbU_{1k}\pmb\xi_{i1k}+o(1)\\
				=&\dfrac{1}{n}\pmb\xi_{i1k}^*d_k^2\bbU_{1k}^*\bbX\bbX^*\bbU_2\bbD_2\left( \dfrac{1}{n}\bbD_2\bbU_2^*\bbX\bbX^*\bbU_2\bbD_2-l_i\bbI_{p-M}\right) ^{-2}\dfrac{1}{n}\bbD_2\bbU_2^*\bbX\bbX^*\bbU_{1k}\pmb\xi_{i1k}+o(1).
			\end{aligned}	
	\end{equation}\end{small}
	
	Recall (\ref{2}). Similar to Jiang and Bai \cite{PG4MT}, using Lemma \ref{PG4MT}, one may change $\bbU_2^*\bbX$ as $\bbZ_{(p-M)\times n}$ of i.i.d. $N(0,1)$ entries. Then, we have
	
		\begin{align}
			&\left\| \pmb\xi_{i2}\right\| ^2\nonumber\\
			=&\dfrac{1}{n^2}\pmb\xi_{i1k}^*d_k^2\bbY_k\bbZ^*\bbD_2\left( \dfrac{1}{n}\bbD_2\bbZ\bbZ^*\bbD_2-l_i\bbI_{p-M}\right) ^{-2}\bbD_2\bbZ\bbY_k^*\pmb\xi_{i1k}+o(1)\nonumber\\
			=&\dfrac{1}{n^2}\left\| \pmb\xi_{i1k}\right\| ^2\dfrac{d_k^2}{m_k} {\rm tr}E\left[ \bbY_k\bbZ^*\bbD_2\left( \dfrac{1}{n}\bbD_2\bbZ\bbZ^*\bbD_2-l_i\bbI_{p-M}\right) ^{-2}\bbD_2\bbZ\bbY_k^*\right] +o_{a.s.}(1)\nonumber\\
			=&\dfrac{1}{n^2}\left\| \pmb\xi_{i1k}\right\| ^2d_k^2E\left[ \dfrac{1}{m_k} {\rm tr}\left( \bbY_k\bbZ^*\bbD_2\left( \dfrac{1}{n}\bbD_2\bbZ\bbZ^*\bbD_2-l_i\bbI_{p-M}\right) ^{-2}\bbD_2\bbZ\bbY_k^*\right) \right.\\
			&\left.\ \ \ \ \ \ \ \ \ \ \ \ \ \ \ \ \ \ \ \ \ \ \ \ \ \ \ \ -{\rm tr}\left( \bbZ^*\bbD_2\left( \dfrac{1}{n}\bbD_2\bbZ\bbZ^*\bbD_2-l_i\bbI_{p-M}\right) ^{-2}\bbD_2\bbZ\right) \right.\nonumber\\
			& \left.\ \ \ \ \ \ \ \ \ \ \ \ \ \ \ \ \ \ \ \ \ \ \ \ \ \ \ \ +{\rm tr}\left( \bbZ^*\bbD_2\left( \dfrac{1}{n}\bbD_2\bbZ\bbZ^*\bbD_2-l_i\bbI_{p-M}\right) ^{-2}\bbD_2\bbZ\right) -\left(\underline{m}_{n1}(\psi_{nk})+\psi_{nk}\underline{m}_{n2}(\psi_{nk})\right)\right.\nonumber\\
			& \left.\ \ \ \ \ \ \ \ \ \ \ \ \ \ \ \ \ \ \ \ \ \ \ \ \ \ \ \ +\left(\underline{m}_{n1}(\psi_{nk})+\psi_{nk}\underline{m}_{n2}(\psi_{nk})\right)	\right] +o_{a.s.}(1).\nonumber
		\end{align}

	Moreover, by Bai and Silverstein \cite{2004bai}, one knows that
	\begin{small}\begin{equation}\label{04}
		\begin{aligned}
			&d_k^2\left( n^{-2}{\rm tr}\left( \bbZ^*\bbD_2\left( n^{-1}\bbD_2\bbZ\bbZ^*\bbD_2-l_i\bbI_{p-M}\right) ^{-2}\bbD_2\bbZ\right)-\left(\underline{m}_{n1}(\psi_{nk})+\psi_{nk}\underline{m}_{n2}(\psi_{nk})\right)\right) =O_p\left( \dfrac{1}{\sqrt{n}}\right) .
		\end{aligned}
	\end{equation} \end{small}
	
	Let $L=\dfrac{d_k^2}{n^2m_k}{\rm tr}\left( \bbY_k\bbZ^*\bbD_2\left( n^{-1}\bbD_2\bbZ\bbZ^*\bbD_2-l_i\bbI_{p-M}\right) ^{-2}\bbD_2\bbZ\bbY_k^*\right)$, it follows from Lemma \ref{cor} and (\ref{04}) that $\sqrt{n}\left( L-d_k^2\left(\underline{m}_{n1}(\psi_{nk})+\psi_{nk}\underline{m}_{n2}(\psi_{nk})\right)\right) $
	tends to a Gaussian variable whose mean is zero and variance  $\theta_k=d_k^4\left( \dfrac{2}{m_k}\sigma_k+\dfrac{\kappa_{x,k}}{m_k^2}\rho_k^2\right) $.
	Therefore, from the second equation in (\ref{2}), by using the Delta Method, Theorem \ref{th2} can be concluded.	
\end{proof}
\subsection{Proof of Theorem \ref{th1}}

Next, let us estimate the asymptotic length of $\bbu_{i1k}$. According to the conclusion of Theorem \ref{th2}, we have
$$\left\| \pmb\xi_{i1k}\right\| ^2 \xrightarrow{D}\dfrac{1}{1+d_k^2\left( \underline{m}_1(\psi_k)+\psi_k \underline{m}_2(\psi_k)\right)}.$$
Since $\dfrac{1}{1+d_k^2\left( \underline{m}_1(\psi_k)+\psi_k \underline{m}_2(\psi_k)\right)}$ is a constant, we can obtain

\begin{equation}
	\left\| \pmb\xi_{i1k}\right\| ^2\xrightarrow{P}\dfrac{1}{1+d_k^2\left( \underline{m}_1(\psi_k)+\psi_k \underline{m}_2(\psi_k)\right)}.
\end{equation}

\subsection{Proofs of Theorem \ref{th3} and Corollary \ref{corollary}}\label{test1}

Let $R_{11}(\psi_1)$ be a Gaussian random variable with mean 0 and $$Var(R_{11}(\psi_1))=(2\theta+\beta_1\omega)d_1^4, \ \text{with} \ \beta_1=d_1^4Ex_{11}^4-3,$$
where
$$\theta=1+2cm_1(\psi_1)+cm_1(\psi_1),\ \ \ \ \ \omega=1+2cm_1(\psi_1)+\left( \dfrac{c(1+m_1(\psi_1))}{\psi_1-c(1+m_1(\psi))}\right) ^2,$$
with
$$m_1(\psi_1)=\int\dfrac{x}{\psi_1-x}\text{d}F(x),\ \ \ \ \ m_2(\psi_1)=\int\dfrac{x^2}{(\psi_1-x)^2}\text{d}F(x).$$
Note that $F$ is the LSD of $\bbD_2\bbU_2^*\bbX\bbX^*\bbU_2\bbD_2/n$. For example, assuming that the variables are real Gaussian, we remark that $$Var(R_{11}(\psi_1))=2\theta d_1^4,$$
where $$\theta=1+2cm_1(\psi_1)+cm_1(\psi_1)=\dfrac{(d_1^2-1+c)^2}{(d_1^2-1)^2-c}.$$

Then, we define
$$G(\psi_1)=\dfrac{1}{1+cm_3(\psi_1)d_1^2}R_{11}(\psi_1),\ \tilde{G}(\psi_1)=Var(G(\psi_1)),$$
where $$m_3(\psi_1)=\int\dfrac{x}{(\psi_1-x)^2}\text{d}F(x).$$
We define as $$\tau(d_1^2)=(\nu\textquotesingle(d_1^2))^2\left( \dfrac{1-c}{\psi_1^2}+c\int \dfrac{\text{d}F(x)}{(x-\psi_1)^2}\right) .$$
\begin{proof}[Proof of Theorem \ref{th3}]
	By definition of $\mathcal{T}_1$ in (\ref{TT1}), we have the following derivation
	\begin{equation}
		\begin{aligned}
			\sqrt{n}\mathcal{T}_1&=\dfrac{\sqrt{n}\left( \langle\bbv_1,\bbf_1\rangle^2-\nu(\widehat{d_1^2})\right)}{d_1^2}\\
			&=\dfrac{\sqrt{n}\left( \langle\bbv_1,\bbf_1\rangle^2-\nu(d_1^2)+\nu(d_1^2)-\nu(\widehat{d_1^2})\right)}{d_1^2}\\
			&=\dfrac{\sqrt{n}\left( \langle\bbv_1,\bbf_1\rangle^2-\nu(d_1^2)\right) }{d_1^2}-\dfrac{\sqrt{n}\left( \nu(\widehat{d_1^2} )-\nu(d_1^2)\right)}{d_1^2}\\
			&=\dfrac{\sqrt{n}\left( \langle\bbv_1,\bbf_1\rangle^2-\nu(d_1^2)\right) }{d_1^2}-\dfrac{\nu\textquotesingle(d_1^2)}{d_1^2}\sqrt{n}(\widehat{d_1^2}-d_1^2),
		\end{aligned}
	\end{equation}
	Note that the asymptotic distribution of $\mathcal{T}_1$ is determined by $\widehat{d_1^2}-d_1^2$, which follows from Theorem 3.2 of \cite{2012dingbai}, we konw that $\sqrt{n}\mathcal{T}_1$ have the same limiting distribution as $\sqrt{n}(\widehat{d_1^2}-d_1^2)$ up to a multiplicative constant $\dfrac{\nu\textquotesingle(d_1^2)}{d_1^2}$. Thus the Theorem \ref{th3} can be concluded. 
\end{proof}

\begin{proof}[Proof of Corollary \ref{corollary}]
	The conclusion in Corollary \ref{corollary} follows from Theorem \ref{th3} and Theorem 4.1 in \cite{2012baiyao} and the fact that $\widehat{d_1^2}$ is a consistent estimator of $d_1^2$ after appropriate scaling.
\end{proof}

\section{Conclusion}\label{conclusion}

In this paper, we propose asymptotic first-order and distributional results for eigenvectors of a generalized spiked model. Unlike Bao et al. \cite{2020baoding}, we replace the assumption of infinite moments with a tail probability $\lim\limits_{\tau\to\infty}\tau^4P(|x_{ij}|>\tau)=0$. This condition serves as both a necessary and sufficient condition for edge universality at the largest eigenvalue. Furthermore, we eliminate the requirement for the form of the spiked covariance matrix, $\pmb{\Sigma}=\bbV\bbD^2\bbV^*$, in favor of $\pmb{\Sigma}=\bbI+\bbS$. Without the constraint of the existence of the 4th moment, we only need a more common and minor condition (\ref{ux}). Additionally, we establish a Central Limit Theorem (CLT) for the eigenvectors corresponding to the spiked eigenvalues outside the interval $[1-\sqrt{c},1+\sqrt{c}]$.

\begin{acks}[Acknowledgments]
	
	Jiang Hu was partially supported by NSFC Grants No. 12171078, No. 12292980, No. 12292982, National Key R \& D Program of China No. 2020YFA0714102, and Fundamental Research Funds for the Central Universities, China No. 2412023YQ003. 
	
	Zhidong Bai was partially supported by NSFC Grants No. 12171198, No. 12271536, and Team Project of Jilin Provincial Department of Science and Technology No. 20210101147JC.

\end{acks}

\bibliographystyle{imsart-number.bst}
\bibliography{reference.bib}

\end{document}